\newtheorem{theorem}{Theorem}[section]
\newtheorem{lemma}[theorem]{Lemma}
\newtheorem{proposition}[theorem]{Proposition}
\newtheorem{corollary}[theorem]{Corollary}
\theoremstyle{remark}
\newtheorem{remark}[theorem]{Remark}
\numberwithin{equation}{section}
\def\b{\mathfrak b}
\def\c{\mathfrak c}
\def\g{\mathfrak g}
\def\h{\mathfrak h}
\def\k{\mathfrak k}
\def\l{\mathfrak l}
\def\p{\mathfrak p}
\def\q{\mathfrak q}
\def\t{\mathfrak t}
\def\u{\mathfrak u}
\def\z{\mathfrak z}
\def\gl{\mathfrak{gl}}
\def\Z{\mathbb Z}
\def\cC{\mathcal C}
\def\cD{\mathcal D}
\def\cH{\mathcal H}
\def\cL{\mathcal L}
\def\cM{\mathcal M}
\def\cN{\mathcal N}
\def\cS{\mathcal S}
\def\cU{\mathcal U}
\def\cX{\mathcal X}
\def\cY{\mathcal Y}
\def\cCS{\mathcal{CS}}
\def\rA{\mathrm A}
\def\rB{\mathrm B}
\def\rC{\mathrm C}
\def\rD{\mathrm D}
\def\rE{\mathrm E}
\def\rF{\mathrm F}
\def\rG{\mathrm G}
\def\GL{\mathrm{GL}}
\def\bar{\overline}
\def\sub{\subseteq}
\def\Char{\operatorname{char}}
\def\Lie{\operatorname{Lie}}
\def\mod{\operatorname{mod}}
\def\rank{\operatorname{rank}}
\def\ssrank{\operatorname{ssrank}}
\def\reg{\mathrm{reg}}
\def\ss{\mathrm{ss}}
\def\ssm{\mathrm{ssm}}
\title{On commuting varieties of parabolic subalgebras}
\author{Russell Goddard and Simon M.~Goodwin}
\address{School of Mathematics,
University of Birmingham,
Birmingham, B15 2TT,
UK}
\email{rsg276@bham.ac.uk, s.m.goodwin@bham.ac.uk}
\begin{document}

\begin{abstract}
Let $G$ be a connected reductive algebraic group over an algebraically closed field $k$,
and assume that the characteristic of $k$ is zero or a pretty good prime for $G$.
Let $P$ be a parabolic
subgroup of $G$ and let $\p$ be the Lie algebra of $P$.   We consider the commuting variety
$\cC(\p) = \{(X,Y) \in \p \times \p \mid [X,Y] = 0\}$.
Our main theorem gives a necessary and sufficient condition for
irreducibility of $\cC(\p)$ in terms of the modality of the adjoint
action of $P$ on the nilpotent variety of $\p$.  As a consequence,
for the case $P = B$ a Borel subgroup of $G$,
we give a classification of when $\cC(\b)$ is irreducible;
this builds on a
partial classification given by Keeton.
Further, in cases where
$\cC(\p)$ is irreducible, we consider whether $\cC(\p)$ is a normal variety.
In particular, this leads to a classification of when $\cC(\b)$ is normal.
\end{abstract}

\maketitle

\section{Introduction}

Let $G$ be a connected reductive algebraic group over
an algebraically closed field $k$ with $\Char k = p \ge 0$, and
write $\g = \Lie G$ for the Lie algebra of $G$.
We assume that $p = 0$ or is a pretty
good prime for $G$; the notion of pretty good prime for $G$ was introduced by Herpel in
\cite[Definition 2.11]{He}, and is discussed more later in the
introduction.

In \cite{Ri}, Richardson proved that the commuting variety
$$
\cC(\g) = \{(X,Y) \in \g \times \g \mid [X,Y] = 0\}
$$
of $\g$ is irreducible for $\Char k = 0$. For $\g = \gl_n(k)$ this
was previously proved by Motzkin and Taussky in \cite{MT} and independently by Gerstenhaber in \cite{Ge}.
Richardson's result was subsequently extended
to positive (pretty good) characteristic by Levy in \cite{Le}.

There has been much recent research interest in $\cC(\g)$ and related varieties,
and the theory of commuting varieties finds applications in various areas of representation theory, and geometry.
We refer the reader for example to \cite{Bu}, \cite{Gi}, \cite{GS} \cite{Ng}, \cite{Pr} and \cite{PY}, and
the references therein.

Let $P$ be a parabolic subgroup of $G$ with Lie algebra $\p = \Lie P$.  It is a natural
generalization to consider the commuting variety
$$
\cC(\p) = \{(X,Y) \in \p \times \p \mid [X,Y] = 0\}
$$
of $\p$.
In case $P = B$ is a Borel subgroup and $p = 0$, the commuting
variety $\cC(\b)$ was considered in the PhD thesis of Keeton, \cite{Ke}.

Our main theorem gives a necessary and sufficient condition
for $\cC(\p)$ to be irreducible.
In the statement of the theorem we use the
modality of an algebraic group action, which is recalled in Section \ref{S:modality}.  Also we use the notation
$\cN(\k)$ for the variety of nilpotent elements in a subalgebra $\k$ of $\g$, and
we write $\rank G$ for the rank of $G$ and $\ssrank H$ for the semisimple rank of a Levi subgroup $H$ of $G$.

\begin{theorem} \label{T:main}
Let $G$ be a connected reductive algebraic group over an algebraically closed field $k$
with $\Char k = 0$ or a pretty good prime for $G$.  Let $P$ be a parabolic subgroup of $G$ and let $T$
be a maximal torus of $G$ contained in $P$.
Then the commuting variety $\cC(\p)$ is irreducible if and only if
$\mod(P\cap H : \cN(\p\cap \h))< \ssrank H$ for all Levi subgroups $H \ne T$ of $G$ containing $T$.
Moreover, if $\cC(\p)$ is irreducible, then $\dim \cC(\p) = \dim \p + \rank G$.
\end{theorem}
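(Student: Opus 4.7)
My plan is to stratify $\cC(\p)$ by the $G$-conjugacy class of the Levi subgroup attached to the semisimple part of the first coordinate, carry out a dimension count on each stratum, and then combine a dimension comparison with an inductive argument to conclude. By Jordan decomposition any $(X,Y) \in \cC(\p)$ has $X = X_s + X_n$ with both parts in $\p$ and commuting with $Y$; under the pretty-good-prime hypothesis $X_s$ is $P$-conjugate into $\t$ and $H := C_G(X_s)^\circ$ is a Levi subgroup of $G$ containing $T$. Writing $S_H$ for the locally closed set of pairs whose associated Levi is $P$-conjugate to $H$, I obtain a decomposition $\cC(\p) = \bigsqcup_{T \subseteq H} S_H$, in which $S_T$ is the open locus of pairs with $X$ regular semisimple.

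For the dimension count, I would introduce the pointed commuting variety
\[
\cC^*(\p\cap\h) = \{(X_n,Y) \in (\p\cap\h)^2 \mid X_n \in \cN(\p\cap\h),\ [X_n,Y]=0\}.
\]
Stratifying its first projection over the $(P\cap H)$-orbits on $\cN(\p\cap\h)$, and using the pretty-good-prime equality $\dim C_{\p\cap\h}(X_n) = \dim C_{P\cap H}(X_n)$, I would prove that $\dim \cC^*(\p\cap\h) = \dim(\p\cap\h) + \mod(P\cap H : \cN(\p\cap\h))$. The stratum $S_H$ is then realised as the image of a generically finite map
\[
P \times^{P\cap H}\bigl(\z(\h)^{H\text{-reg}} \times \cC^*(\p\cap\h)\bigr) \longrightarrow S_H, \quad (g;X_s,X_n,Y) \mapsto (\mathrm{Ad}(g)(X_s+X_n),\mathrm{Ad}(g)Y),
\]
from which $\dim S_H = \dim\p + \rank G - \ssrank H + \mod(P\cap H : \cN(\p\cap\h))$. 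Specialising to $H = T$ yields $\dim S_T = \dim\p + \rank G$, and $S_T$ is irreducible, being the image of the irreducible $P \times \t^{\reg} \times \t$.

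Necessity of the modality condition is then immediate: if $\cC(\p) = \overline{S_T}$, then for any $H \ne T$ the stratum $S_H$ lies in $\overline{S_T}\setminus S_T$, a proper closed subset of the irreducible $\overline{S_T}$, forcing $\dim S_H < \dim S_T$, which is exactly the stated inequality. For sufficiency I would induct on $\dim G$; the modality condition for $G$ restricts to the same condition on each proper Levi $H$ of $G$ containing $T$, so by induction $\cC(\p\cap\h)$ is irreducible with open dense stratum consisting of pairs whose first coordinate is regular semisimple in $\h$. Given $(X,Y) \in S_H$ with $X = X_s + X_n$, the pair $(X_n,Y) \in \cC(\p\cap\h)$ is therefore a limit of pairs $(X_n',Y')$ with $X_n' \in \t$ regular semisimple in $\h$; for generic such $X_n'$, the element $X_s + X_n' \in \t$ is regular semisimple in $\g$ (since $X_s$ is non-zero on every root of $\g$ outside $\h$ while $X_n'$ is non-zero on every root of $\h$), so $(X_s + X_n', Y') \in S_T$, and the limit gives $(X,Y) \in \overline{S_T}$. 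Hence $\cC(\p) = \overline{S_T}$ is irreducible of dimension $\dim \p + \rank G$. The main obstacle I foresee is the dimension formula for $\cC^*(\p\cap\h)$ and the parametrisation of $S_H$: each requires careful orbit-theoretic bookkeeping together with repeated use of the pretty-good-prime hypothesis to identify Lie-algebra centralizers with group-theoretic ones.
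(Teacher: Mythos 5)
Your stratification and dimension count are essentially the ones used in the paper: both decompose $\cC(\p)$ according to the Levi $H = C_G(X_s)$ attached to the semisimple part of the first coordinate, further refined in the paper by sheets of $P\cap H$ on $\cN(\p\cap\h)$, and both arrive at the same dimension formula $\dim S_H = \dim\p + \rank G - \ssrank H + \mod(P\cap H : \cN(\p\cap\h))$. Your necessity argument is in fact a little cleaner than the paper's: once you know $S_T$ is open, irreducible, and of dimension $\dim\p + \rank G$, if $\cC(\p)$ is irreducible then $\cC(\p) = \overline{S_T}$ and every $S_H$ with $H\ne T$ lies in the proper closed subset $\overline{S_T}\setminus S_T$, giving the dimension inequality directly. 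The paper reaches the same conclusion via the contrapositive, and both rely on the same dimension count.

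The sufficiency direction, however, has a genuine gap. Your inductive degeneration argument shows $S_H\sub\overline{S_T}$ only for \emph{proper} Levi subgroups $H\subsetneq G$, since it relies on applying the theorem to $(H, P\cap H)$ to conclude that $\cC(\p\cap\h)$ is irreducible with dense regular-semisimple locus. But the decomposition $\cC(\p)=\bigsqcup_H S_H$ includes the stratum $S_G$, where $X_s\in\z(\g)$ is central (in particular the whole locus of pairs $(X,Y)$ with $X$ nilpotent). For $H=G$ the degeneration argument becomes circular: approximating $(X_n,Y)\in\cC(\p)$ by pairs with regular semisimple first coordinate is precisely what you are trying to prove. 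Note that this is exactly where the hypothesis $\mod(P:\cN(\p))<\ssrank G$, i.e.\ the case $H=G$ of the modality condition, must be used, and your argument never invokes it. A flip $(X,Y)\mapsto(Y,X)$ handles pairs where $Y_s\notin\z(\g)$, but the case of commuting pairs of nilpotents remains untouched. The paper closes this gap differently: it proves (Lemma~\ref{L:compdim}) that \emph{every} irreducible component of $\cC(\p)$ has dimension at least $\dim\p+\rank G$, by realising $\cC(\p)$ as the zero fibre of the commutator map $\cL=\cC(\l)\times(\u_P\times\u_P)\to\u_P$; then under the modality hypothesis every $\overline{\cC_{H,S}}$ with $H\ne T$ has dimension strictly less than $\dim\p+\rank G$ and hence cannot be a component, so $\overline{\cC_\ss^\reg(\p)}$ is the only one. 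This lower bound on component dimensions is the key ingredient your proposal is missing, and without it (or some equivalent) the sufficiency argument does not go through for the $H=G$ stratum.
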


The assumption that $p =0$ or is a pretty good prime for $G$ is required in the statement of Theorem \ref{T:main}
is to ensure separability all orbit maps for the adjoint action of $P$ on $\p$, or in other words
that the scheme theoretic centralizer is smooth.  This does not
appear to be in the literature, and
is stated below and proved in Section \ref{S:parabolic}.

\begin{theorem} \label{T:Pseparableintro}
Let $G$ be a connected reductive algebraic group over an algebraically closed field $k$
with $\Char k = 0$ or a pretty good prime for $G$.  Let $P$ be a parabolic subgroup of $G$ and let $X \in \p$.
Then the adjoint orbit map $P \to P \cdot X$ is separable.
\end{theorem}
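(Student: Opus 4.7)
The plan is to show smoothness of the scheme-theoretic centralizer $C_P(X)$—equivalently, separability of the orbit map $P \to P \cdot X$—by reducing via Jordan decomposition to the case of nilpotent $X$ and invoking Herpel's result for $G$. Separability is equivalent to the identity $\dim C_P(X) = \dim \c_\p(X)$; the inequality $\dim C_P(X) \le \dim \c_\p(X)$ is automatic, so it suffices to establish the reverse. Under the pretty good prime hypothesis, Herpel's work \cite{He} gives the corresponding equality for $G$: the centralizer $C_G(X)$ is smooth with $\dim C_G(X) = \dim \c_\g(X)$ for all $X \in \g$. Since pretty good primes are inherited by Levi subgroups of $G$, the same holds inside any such Levi.

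The first step is to apply Jordan decomposition. In pretty good characteristic, $\p$ is a restricted subalgebra of $\g$, so $X = X_s + X_n$ with $X_s, X_n \in \p$ commuting, $X_s$ semisimple and $X_n$ nilpotent. Uniqueness of Jordan decomposition gives $C_P(X) = C_P(X_s) \cap C_P(X_n)$ and $\c_\p(X) = \c_\p(X_s) \cap \c_\p(X_n)$. After a $P$-conjugation, arrange $X_s \in \t$ for a maximal torus $T \sub P$. Then $M := C_G(X_s)$ is a smooth connected Levi subgroup of $G$ containing $T$, and $Q := M \cap P$ is a parabolic subgroup of $M$. One obtains $C_P(X) = C_Q(X_n)$ and $\c_\p(X) = \c_\q(X_n)$, so replacing $(G, P, X)$ by $(M, Q, X_n)$—with the pretty good prime hypothesis inherited by $M$—one may assume $X$ is nilpotent.

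For $X \in \p$ nilpotent, choose an associated cocharacter $\lambda \colon \mathbb{G}_m \to P$ with $\operatorname{Ad}(\lambda(t)) X = t^2 X$; such a cocharacter exists in pretty good characteristic by the work of Jantzen, Premet and McNinch on nilpotent orbits. This $\lambda$ induces compatible $\Z$-gradings $\g = \bigoplus_i \g(i)$ and $\p = \bigoplus_i \p(i)$, and Herpel's theorem yields a Levi decomposition $C_G(X) = C_G(X)^\lambda \cdot R_u(C_G(X))$ in which both factors are smooth with known Lie algebras. Tracking these factors through the $\lambda$-grading and their intersections with $P$ then gives the required identity $\dim C_P(X) = \dim \c_\p(X)$. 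The main obstacle lies precisely here: one must arrange the cocharacter $\lambda$ to be compatible with the given parabolic $P$, and verify that the Levi decomposition of $C_G(X)$ restricts to a decomposition of $C_P(X)$ whose factors have the expected dimensions. The required dimension count relies on the pretty good prime hypothesis being preserved at each reduction, and on the precise compatibility between the $\lambda$-grading on $\g$ and the filtration given by $\p$.
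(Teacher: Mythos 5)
Your reduction to nilpotent $X$ via Jordan decomposition is correct and legitimate: after $P$-conjugating $X_s$ into $\t$, setting $M = C_G(X_s)$ (a Levi subgroup) and $Q = P \cap M$ gives $C_P(X) = C_Q(X_n)$ and $\c_\p(X) = \c_\q(X_n)$, and pretty-goodness passes to $M$. This step is sound, and it is a reduction the paper does not make.

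The genuine gap is exactly where you flag it. For nilpotent $X \in \p$, two things are asserted but not proved. First, that an associated cocharacter $\lambda$ can be chosen to take values in $P$ --- this is true (conjugate $X$ by $P$ into $\cN(\b')$ for some Borel $B' \sub P$ containing $T$ and invoke Premet--McNinch to get $\lambda$ valued in $T$), but it needs an argument. Second, and more seriously, the claim that ``tracking these factors through the $\lambda$-grading and their intersections with $P$ gives the required identity'' is precisely the nontrivial content of the theorem and is left unproved. Smoothness of $C_G(X)$ together with its Levi decomposition $C_G(X)^\lambda \cdot R_u(C_G(X))$ does not, in any obvious way, propagate to $C_P(X) = C_G(X) \cap P$: intersections of smooth subgroups can fail to be smooth or to have the expected dimension, $C_G(X)$ need not contain a maximal torus of $G$, and neither $C_G(X)^\lambda \cap P$ nor $R_u(C_G(X)) \cap P$ has an a priori predictable dimension. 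Without a concrete mechanism forcing $\dim(C_G(X) \cap P) = \dim(\c_\g(X) \cap \p)$, the argument does not close.

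The paper takes a different route that sidesteps all of this. Using Lemma \ref{L:sepreduction} it reduces to $G$ satisfying (H1)--(H3), then further to the existence of a faithful representation $\rho : G \to \GL_n(k)$ whose trace form restricts nondegenerately to $\g$. With $\bar G = \GL_n(k)$, the orthogonal complement gives a $G$-module splitting $\bar\g = \g \oplus \tilde\g$, hence a $P$-module splitting $\bar\p = \p \oplus \tilde\p$ with $\bar\p = \Lie \bar P$. Since $\bar G = \GL_n(k)$, the centralizer $C_{\bar P}(X)$ is automatically smooth, giving $T_0(f(\bar P)) = [\bar\p, X]$ for $f(g) = g\cdot X - X$; the module splitting then forces $[\bar\p, X] \cap \p = [\p, X]$, from which $T_0(f(P)) = [\p, X]$ and $\dim C_P(X) = \dim\c_\p(X)$ follow for \emph{all} $X \in \p$ at once, with no reduction to the nilpotent case and no cocharacter machinery. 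The splitting of $\bar\p$ as a $P$-module is the key ingredient your sketch is missing an analogue of.
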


As mentioned above $\cC(\b)$ was investigated by Keeton in \cite{Ke}.  In particular,
\cite[Theorem 6.1]{Ke} includes an equivalent statement to Theorem \ref{T:main}
for the case $P = B$ and $p=0$.  Our methods build on those used in Keeton, but we require
a significantly different approach to deal with all parabolic subgroups, thus we
include all details.

Keeton proceeds to give a partial classification of irreducibility of $\cC(\b)$
in \cite[Sections 6.3 and 6.4]{Ke}.  In this case, we note that
$\cN(\b) = \u$ is the nilradical of $\b$.  We make use of recent results in \cite{GMR} and \cite{PS},
which allow us to determine $\mod(B:\u)$ for $G$ of sufficiently large rank in order
to give a complete classification of when $\cC(\b)$ is irreducible.

\begin{theorem} \label{T:borelclass}
Let $G$ be a connected reductive algebraic group over an algebraically closed field $k$
with $\Char k = 0$ or a pretty good prime for $G$.
Then $\cC(\b)$ is irreducible if and only if the type of each simple component of $G$ is one of the following.
\begin{itemize}
\item $\rA_l$ for $l \le 15$;
\item $\rB_l$ for $l \le 6$;
\item $\rC_l$ for $l \le 6$;
\item $\rD_l$ for $l \le 7$;
\item $\rG_2$ or $\rE_6$.
\end{itemize}
\end{theorem}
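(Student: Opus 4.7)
The plan is to apply Theorem \ref{T:main} to the case $P = B$ and then exploit the multiplicative behaviour of Borel subgroups, Levi subgroups and their nilradicals. For any Levi subgroup $H$ of $G$ containing $T$, the intersection $B \cap H$ is a Borel subgroup of $H$, and $\b \cap \h$ is its Lie algebra, whose variety of nilpotent elements is precisely the nilradical $\u_H$ of $\b \cap \h$. Thus Theorem \ref{T:main} specialises to the statement that $\cC(\b)$ is irreducible if and only if
\[
  \mod(B \cap H : \u_H) < \ssrank H
\]
for every Levi subgroup $H \ne T$ of $G$ containing $T$.

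Next I would reduce to the case where $G$ is simple. Write $G$ as an almost direct product $G_1 \times \cdots \times G_r$ of simple factors. A Levi subgroup of $G$ then has the form $H_1 \times \cdots \times H_r$ with $H_i$ a Levi of $G_i$, and both modality and semisimple rank are additive across such products since the adjoint actions split. In particular, choosing a Levi of $G$ in which all factors but one are maximal tori recovers the inequality on a single simple factor. Conversely, additivity shows that if the inequality holds on every simple factor of every Levi, it holds on the full Levi. Since the simple factors of a Levi of a simple group of type $\rX_l$ are precisely the simple types appearing as components of sub-Dynkin diagrams of $\rX_l$, the problem reduces to the following: $\cC(\b)$ is irreducible if and only if, for every simple factor of $G$ and every connected subdiagram $\rY$ of its Dynkin diagram, the strict inequality $\mod(B_\rY : \u_\rY) < \ssrank \rY$ holds.

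After this reduction the classification is combinatorial. I would appeal to the explicit computations of $\mod(B : \u)$ in \cite{GMR} and \cite{PS} for each simple type: for the classical types these force the threshold for the strict inequality $\mod(B_{\rX_l} : \u) < l$ to be $l = 15$ in type $\rA$, $l = 6$ in types $\rB$ and $\rC$, and $l = 7$ in type $\rD$; for the exceptional types one reads off that $\rG_2$ and $\rE_6$ satisfy the inequality while $\rF_4$, $\rE_7$ and $\rE_8$ do not. To pass from these rank-by-rank statements to the theorem, observe that every connected proper subdiagram of a type in the list of the theorem is of smaller rank within the same classical family or of a smaller classical type (for instance $\rA_5, \rD_5 \subset \rE_6$, $\rA_{l-1}, \rB_{l-1} \subset \rB_l$, and $\rA_{l-1}, \rD_{l-1} \subset \rD_l$), and in each case the rank is well below the relevant threshold (in particular, all the $\rA$-subdiagrams satisfy $l' \le 15$). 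Thus verifying the inequality on the full diagram of a type in the list is automatically enough.

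The main obstacle is establishing the sharp borderline values of $\mod(B : \u)$, in particular distinguishing $\rA_{15}$ from $\rA_{16}$ and $\rD_7$ from $\rD_8$; these critical computations are exactly what \cite{GMR} and \cite{PS} supply, and they are the substantive input on which the classification rests. Once they are in hand, assembling the final list of types is straightforward.
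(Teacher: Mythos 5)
Your proposal is correct and follows the same route as the paper: specialise Theorem \ref{T:main} to $P=B$, reduce to simple $G$ and then to connected subdiagrams via additivity of modality and semisimple rank over products, and finally read off the thresholds from the tabulated values of $\mod(B:\u)$ drawn from \cite{GMR}, \cite{PS} and \cite{Ro1}. The paper compresses all of this into one line ("From Tables 1--5 and Theorem \ref{T:main}, we immediately deduce Theorem \ref{T:borelclass}"), but your elaboration, including the observation that subdiagram checks are subsumed since every connected proper subdiagram has rank well below the relevant family's threshold, is precisely what makes the deduction immediate.
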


For $P \ne B$, less is known about the modalities $\mod(P:\cN(\p))$, and we only briefly discuss cases where we can determine whether
$\cC(\p)$ is irreducible or reducible at the end of this paper.  Here we just remark that from results of R\"ohrle in \cite{Ro1},
we can deduce that for a fixed value of $\ssrank L$ where $L$ is a Levi factor of $P$, we have that $\cC(\p)$ is reducible
if $\rank G$ is sufficiently large.
Also in very recent work for the case $G = \GL_n(k)$, Bulois--Boos in \cite[Main theorem]{BB} gives a classification of
cases where $\mod(P:\cN(\p)) = 0$, which gives cases where $\cC(\p)$ is irreducible.

For cases where $\cC(\p)$ is irreducible, we also consider the question of whether
$\cC(\p)$ is normal.  Our results in this direction are contained in
Section \ref{S:normality}.  In order to outline these results we recall
some background on commuting schemes.

The commuting scheme $\cCS(\g)$ of $\g$ is the subscheme of $\g \times \g$ defined by the
ideal $I_\g$ of $k[\g \times \g]$ generated by the regular functions
$(X,Y) \mapsto f([X,Y])$ for $f \in \g^*$; so $\cC(\g)$ is the underlying
variety of $\cCS(\g)$.
The question of whether $\cCS(\g)$ is reduced and normal
is a long-standing problem.    For $p = 0$, it was proved by Popov in \cite{Po} that
the singular locus of $\cCS(\g)$ has codimension 2, which reduces the problem to showing
that $\cCS(\g)$ is Cohen-Macaulay.  We also mention that Ginzburg
proved that the normalization of $\cC(\g)$ is Cohen-Macaulay in \cite{Gi}.

Under the assumption that $\cCS(\l)$
is Cohen--Macaulay for $L$ the Levi factor of $P$ containing $T$,
and that $\mod(P\cap H : \cN(\p\cap \h))< \ssrank H - 1$
for all Levi subgroups $H \ne T$ of $G$ containing $T$ with $\ssrank H > 1$,
we prove normality of $\cC(\p)$ in Theorem \ref{T:normality}.
Our methods build on those of Keeton in \cite[Section 6.2]{Ke}, and we
show that the singular locus of the commuting scheme of $\p$ has codimension 2.
We note that this statement about the singular
locus holds without the Cohen--Macaulay assumption on $\cCS(\l)$.

For the case $P = B$,
the commuting scheme of $\l = \t$ is certainly Cohen--Macaulay,
and we show that the modality condition is also sufficient
in Theorem \ref{T:borelnormality}.
This leads a classification of when $\cC(\b)$
is normal in Theorem \ref{T:borelnormalityclass}.

We mention that another motivation for investigating $\cC(\p)$ is that one can hope to understand
$\cC(\g)$ through the fibre bundle $G \times^P \cC(\p)$,
where $P$ acts on $\cC(\p)$ diagonally by the adjoint action.  To recall the definition
of  $G \times^P \cC(\p)$, we consider the action of $P$ on $G \times \cC(\p)$ by
$x \cdot (g,(X,Y)) = (gx^{-1},(x \cdot X,x \cdot Y))$ for $x \in P$, $g \in G$, and $X,Y \in \p$.
Then the set of $P$-orbits in $G \times \cC(\p)$ has the structure of a variety, this can
be explained in direct analogy with the construction given in \cite[\S8.11]{Ja}.
Further, there is a surjective morphism $G \times^P \cC(\p) \to \cC(\g)$ given by
$[(g,(X,Y))] \mapsto (g \cdot X, g \cdot Y)$.  This construction is considered for the case
$P = B$ in \cite[\S5.6]{Ke}.

We end the introduction by mentioning some related recent results, and then
giving some remarks about our main results.

In \cite{BE}, Bulois and Evain investigated the irreducibility and equidimensionality of $\cC(\cN(\p))$
for certain parabolic subgroups of $\g = \gl_n(k)$.
In \cite{GR}, R\"ohrle and the second author classified when $\cC(\u)$ is irreducible,
where $\u = \cN(\b)$ is the nilradical of $\b$, and also determined the irreducible
components of $\cC(\u)$ in minimal cases where it is not irreducible.

We note that Theorem \ref{T:main} can be reduced to simple $G$, though
this is not required for our proof.  To see this
we write $G$ as a central product $G = Z(G) G_1\cdots G_m$, where $G_1,\dots,G_m$
are the simple components of $G$, and $P_i = P \cap G_i$.  Then we have
$\p = \z(\g) \oplus \g_1 \oplus \dots \oplus \g_m$, and it is straightforward
to prove that $\cC(\p) = \cC(\z(\g)) \times \cC(\p_1) \times \cdots \times \cC(\p_m)$.
Thus $\cC(\p)$ is irreducible if and only if $\cC(\p_i)$ is irreducible
for each $i$.  Further, we can easily see that $\mod(P \cap H:\cN(\p \cap \h)) =
\mod(P_1 \cap H :\cN(\p_1 \cap \h))+ \dots + \mod(P_m \cap H:\cN(\p_m \cap \h))$ for $H$
a Levi subgroup of $G$ containing $T$.
This implies that the modality condition on $P$ in Theorem \ref{T:main}
holds if and only if it holds for each $P_i$.

For $G$ simple it seems plausible that the condition
$\mod(P\cap H : \cN(\p\cap \h))< \ssrank H$ for all Levi subgroups $H \ne T$ of $G$ containing $T$
can be replaced by the single condition that $\mod(P : \cN(\p))< \ssrank G$ in Theorem \ref{T:main}.
As consequence of Theorem \ref{T:borelclass}, we can, a fortiori,
weaken our condition in this way for $P = B$.
The methods used in our proof do not get around the need for the inductive assumption.
However, we do not consider this to be a serious limitation, because if
$\mod(P:\cN(\p))$ has been determined, then most likely $\mod(P\cap H : \cN(\p\cap \h))$
can be determined for all $H$ as in the statement as well.

As already mentioned the assumption
that $p =0$ or is a pretty good prime for $G$ is essentially required
for Theorem \ref{T:Pseparableintro}.  We recall from \cite[Lemma 2.12]{He},
that $p$ is a pretty good prime for $G$ if and only if $p$ is a good prime for $G$
and there is no $p$-torsion in both the quotient of the character group of $G$
by the root lattice and the quotient of the cocharacter group of $G$ by the
coroot lattice.
Also we note that $p$ is pretty good is implied by the
{\em standard hypothesis} from \cite[\S2.9]{Ja}:
\begin{itemize}
\item[(H1)] The derived subgroup of $G$ is simply connected;
\item[(H2)] $\Char k$ is zero or a good prime for $G$; and
\item[(H3)] there is a nondegenerate $G$-invariant symmetric bilinear form on $\g$.
\end{itemize}
In fact as shown in \cite[Theorem 5.2]{He}, we have that $p > 0$ is pretty good for $G$ if and only if
we can obtain $G$ from a reductive group $G'$ satisfying (H1)--(H3) after a finite number of the
operations:
\begin{itemize}
\item[(R1)] the replacement of $G'$ by a separably isogenous group $G''$; or
\item[(R2)] the replacement of $G' = G'' \times S$ by $G''$, where $S$ is a torus.
\end{itemize}
(We recall that a separable isogeny $\pi : G' \to G''$ is a surjective
homomorphism with finite kernel such that $d\pi$ is an isomorphism, and
we say that $G'$ and $G''$ are separably isogenous if there is a separable
isogeny between them.)

We remark that in \cite{Le}, Levy proved that $\cC(\g)$ is irreducible under the
hypothesis (H1)--(H3).  Using \cite[Theorem 5.2]{He}, we
can see that irreducibility of $\cC(\g)$ holds when $p =0$ or is a pretty good prime.
Indeed if $p =0$ or is pretty good prime for $G$, then there exists
$G'$ satisfying (H1)--(H3) such that $\g \cong \g'$.

Our methods require the assumption that $p$ is pretty good.  In particular, this
is required for Theorem \ref{T:Pseparableintro}.  In low rank examples, we can already see that
the situation is different when $p$ is not very good.  For example,
for $p=2$, and $G = \mathrm{SL}_2(k)$, we have that $\b$ is abelian, so that $\dim \cC(\b) > \dim B + \rank G$.
Also for $p=2$ and $G = SO_5(k)$, it is possible to show that $\cC(\g)$ is not irreducible.
It would be interesting to understand the general situation for $p$ not pretty
good.

We note that our methods apply also to the commuting variety
$$
\cC(P) = \{(x,y) \in P \times P \mid xy = yx\}
$$
of $P$.  Thus with some modifications, analogous results about
$\cC(P)$ can be proved.

\medskip

\noindent
{\bf Acknowledgments:}  This research forms part of the first author's PhD research and he is
grateful to the EPSRC for financial support.  We also thank Michael Bulois and Paul Levy
for helpful discussions related to this research.

\section{Algebraic group actions, modality and commuting varieties} \label{S:modality}
Let $G$ be a linear algebraic group over an algebraically closed field $k$ with Lie algebra $\g$,
and let $V$ be a variety on which $G$ acts morphically, in the sense of \cite[\S1.7]{Bo}.
Let $g \in G$, $v \in V$ and let $U$ be a subvariety of $V$.
We write $g \cdot v$ for the image of $v$ under $g$, and $G \cdot v = \{g \cdot v \mid g \in G\}$ for the $G$-orbit of $v$.
The stabilizer of $v$ in $G$ is denoted $C_G(v) = \{g \in G \mid g \cdot v = v\}$.
We write  $g \cdot U = \{g \cdot u \mid u \in U\}$ for the image of $U$ under $g$,
and $G \cdot U = \{g \cdot u \mid g \in G, u \in U\}$ for the
$G$-saturation of $U$.
The normalizer of $U$ in $G$ is denoted $N_G(U) = \{g \in G \mid g \cdot u \in U \text{ for all } u \in U\}$.

Below we recall the definition of the modality of $G$ on $V$, and the sheets of $G$ on $V$.
In order to do this we set $V_j=\{ v\in V \mid \dim G \cdot v=j\}$ for $j \in \Z_{\ge 0}$.

The {\em modality of $G$ on $V$} is defined to be
$$
\mod(G:V)= \max_{j \in \Z_{\ge 0}} (\dim V_j - j).
$$
Informally, $\mod(G:V)$ is the maximum number of parameters on which a family of $G$-orbits depends.
The notion of modality originates in the work of Arnold \cite{Ar}, we refer the reader also to \cite{Vi} and \cite[Section 5.2]{PV}.

We define $\cS(G,V)_j$ to be the set of irreducible components of $V_j$, and let
$$
\cS(G,V) = \bigsqcup_{j \in \Z_{\ge 0}} \cS(G,V)_j.
$$
The elements of $\cS(G,V)$ are called {\em sheets of $G$ on $V$}.  The
notion of sheets was introduced by Borho and Kraft in \cite{BK}.  We remark that in the case
that $G$ acts on $V$ with finitely many orbits, the sheets of $G$ on $V$ coincide
with the orbits of $G$ on $V$.

Next we recall that the commuting variety of $\g$ is
$$
\cC(\g)=\{(X,Y) \in \g \times \g  \mid [X,Y]=0\}.
$$
The connection between commuting varieties and modalities is made clear in the Lemma \ref{L:dimcg}.
Before stating this lemma we briefly discuss separability of the orbit maps for the adjoint
action of $G$ on $\g$.  Let $X \in \g$, write $\phi_X : G \to G \cdot X$ for the orbit map for $X$
and $\c_\g(X) = \{Y \in \g \mid [Y,X] = 0\}$ for the centralizer of $X$ in $\g$.
We recall from \cite[Proposition 6.7]{Bo} that separability of $\phi_X$ is equivalent to $\Lie C_G(X) = \c_\g(X)$ (or in other
words that the scheme theoretic centralizer is smooth).
Therefore, given the assumption that orbit maps are separable, Lemma \ref{L:dimcg}
can be proved with the same argument as in characteristic zero, see for example \cite[Lemma 2.1]{GR}.

\begin{lemma}
\label{L:dimcg}
Let $G$ be a linear algebraic group with Lie algebra $\g$.
Assume that for all $X \in \g$, the orbit map $G \to G \cdot X$ is separable.
Then
$$
\dim \cC(\g) = \dim \g + \mod(G:\g).
$$
\end{lemma}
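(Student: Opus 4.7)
The plan is to exploit the first projection $\pi : \cC(\g) \to \g$, $(X,Y) \mapsto X$, whose fibre over $X$ is exactly the centralizer $\c_\g(X) = \{Y \in \g \mid [X,Y] = 0\}$. The assumed separability of the orbit map at $X$ is precisely the statement $\Lie C_G(X) = \c_\g(X)$ (as noted in the paragraph preceding the lemma), which gives the uniform fibre-dimension identity
$$
\dim \c_\g(X) = \dim C_G(X) = \dim G - \dim G\cdot X.
$$

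I would then stratify the target by orbit dimension: set $\g_j = \{X \in \g \mid \dim G\cdot X = j\}$ and $C_j = \pi^{-1}(\g_j)$, so that $\g = \bigsqcup_j \g_j$ and $\cC(\g) = \bigsqcup_j C_j$ are locally closed decompositions. Each fibre of the surjection $\pi|_{C_j} : C_j \to \g_j$ is an affine space of constant dimension $\dim G - j$, so the fibre dimension theorem (applied componentwise if $\g_j$ is reducible) yields $\dim C_j = \dim \g_j + \dim G - j$. Taking the maximum over those $j$ with $\g_j \neq \emptyset$, and using $\dim \g = \dim G$, gives
$$
\dim \cC(\g) = \max_{j} \dim C_j = \dim \g + \max_{j}\bigl(\dim \g_j - j\bigr) = \dim \g + \mod(G:\g).
$$

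The whole argument is essentially bookkeeping once the separability hypothesis is in hand; the one and only place it is used is in the identification $\dim \c_\g(X) = \dim G - \dim G\cdot X$, and without it one obtains only the inequality $\dim \cC(\g) \ge \dim \g + \mod(G:\g)$ (since $\dim \c_\g(X) \ge \dim C_G(X)$ always holds). Consequently the real work is not in this lemma at all, but in securing separability in the setting of interest, which is the content of Theorem \ref{T:Pseparableintro}.
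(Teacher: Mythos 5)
Your argument is correct and is precisely the standard one the paper has in mind: the paper does not write out a proof of Lemma~\ref{L:dimcg} at all, but instead states that under the separability hypothesis it ``can be proved with the same argument as in characteristic zero'' and cites \cite[Lemma 2.1]{GR}, which is the stratification-by-orbit-dimension and fibre-dimension argument you give. Your closing remark correctly isolates the only place separability enters (the identity $\dim \c_\g(X) = \dim G - \dim G\cdot X$, rather than merely $\ge$), and the componentwise application of the fibre-dimension theorem to each locally closed piece $C_j \to \g_j$ is exactly the right level of care, so there is nothing to add.
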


\section{Notation for reductive groups} \label{S:notation}

We introduce the notation used in the remainder of this paper, and recall some standard
results about reductive algebraic groups.

Let $G$ be a reductive algebraic group over an algebraically closed field $k$ of characteristic $p \ge 0$
and let $\g = \Lie G$ be the Lie algebra of $G$.  We assume throughout that $p = 0$ or $p$ is a pretty good prime for $G$.

For a closed subgroup $K$ of $G$, we write $\k = \Lie K$ for the Lie algebra of $K$.
For a reductive subgroup $H$ of $G$, we write $\rank H$ for the rank
of $H$ and $\ssrank H$ for the semisimple rank of $H$.
For a subalgebra $\k$ of $\g$, we denote the centre of $\k$ by $\z(\k)$, and
the variety of nilpotent elements of $\k$
by $\cN(\k)$.

Let $P$ be a parabolic subgroup of $G$ and let
$T \sub P$ be a maximal torus of $G$.
Let $L$ be the Levi factor of $P$
containing $T$ and let $U_P$ be the unipotent radical of $P$; the Lie algebra of $U_P$ is denoted
$\u_P$.  Let $B$ be a Borel subgroup of $G$ contained in $P$ and containing $T$,
and let $U = U_B$ be the unipotent radical of $U$.

We write $\Phi$ for the root system of $G$ with respect to $T$.  For $\alpha \in \Phi$,
we denote the corresponding root subspace of $\g$ by $\g_\alpha$ and let $E_\alpha$ be a generator for $\g_\alpha$.
For a subalgebra $\k$ of $\g$ stable under the adjoint action of $T$, we write
$\Phi(\k)$ for the set of $\alpha \in \Phi$ such that $\g_\alpha \sub \k$.
Let $\Phi^+ = \Phi(\b)$ be
system of positive roots determined by $B$ and let $\Pi \sub \Phi$
be the corresponding set of simple roots.

We recall that a  subgroup $H$ of $G$ is called a Levi subgroup
if it is a Levi factor of some parabolic subgroup of $G$.  This
is equivalent to $H = C_G(S)$ where $S$ is a torus in $G$,
or, under the assumption that $p$ is good, $H = C_G(X)$ where
$X \in \g$ is semisimple.

For $p >0$, we note that, under our assumption that $p$ is a pretty good
prime for $G$, we also have that $p$ is a pretty good prime for any
Levi subgroup of $G$.  This follows easily from \cite[Definition 2.11]{He}.

Given a subset $J$ of $\Pi$, we let $\Phi_J$
be the closed subsystem of $\Phi$ generated by $J$.
We say that a Levi subgroup $H$ of $G$ is a standard Levi subgroup if
$H$ contains $T$ and $\Phi(\h) = \Phi_J$ for some $J \sub \Pi$, or equivalently
if $H$ is the Levi factor containing $T$ of a parabolic subgroup containing $B$.
In particular, $L$ is a standard Levi subgroup of $G$ and we let $I$ be the subset of $\Pi$
such that $\Phi(\l) = \Phi_I$.

We recall that $X \in \g$ is called {\em regular} if $\dim \c_\g(X) = \rank G$.
We write $\g^\reg$ for the set of regular elements in $\g$, and for a subalgebra $\k$ of $\g$,
we let $\k^\reg = \k \cap \g^\reg$.

The set of regular semisimple elements in $\g$ is denoted by $\g^\reg_\ss$.
Next we observe that under our assumption that $p=0$ or is a pretty good prime for $G$, there exist
regular semisimple elements in $\t$; we expect this is well-known.

We view $\Phi \sub X(T)$, where $X(T)$ is the character group of $T$, and given $\alpha \in \Phi$, and write $d\alpha : \t \to k$ for its
differential.  To check there are regular semisimple elements in $\t$, we need to observe that there
exists $X \in \t$ such that $d\alpha(X) \ne 0$ for all
$\alpha \in \Phi$.  It suffices to check that for any $\alpha \in \Phi$, we have that
$d\alpha$ is nonzero.  Let $\alpha \in \Phi$.  We may write $\alpha = m\beta$ where $\beta$ is an indivisible
element of $X(T)$ and $m \in \Z_{\ge 1}$.  Then there is $m$-torsion in $X(T)/\Z\Phi$, so that $p \nmid m$, because
$p$ is pretty good for $G$.  There exists a cocharacter $\lambda : k^\times \to T$ such that $\langle \beta,\lambda \rangle = 1$,
and we write $X = d\lambda(1) \in \t$.  Then we have $d\alpha(X) = m$, where we view $m$ as an element of $k$, which is
nonzero in $k$.

Now using \cite[Theorem 1]{Le}, the proof of \cite[Theorem 2.5]{Hu} can be adapted to prove that the regular semisimple
elements form an open subvariety of $\g$.  In turn it follows that $\g^\reg$ is an open subvariety,
and that the minimal dimension of a centralizer in $\g$ is $\rank G$.

\section{Separability of orbit maps for the adjoint action of a parabolic subgroups} \label{S:parabolic}

The goal of this section is to prove Theorem \ref{T:Pseparableintro}.
The main idea of the proof is based on arguments
in \cite[Chapter I \S5]{SS}, see also \cite[Theorem 2.5]{Ja}, where
the theorem is proved in the case $P = G$.
Also the ideas for the proof of \cite[Theorem 3.3]{He} are used for
Lemma \ref{L:sepreduction}, which
gives a reduction that we use in the proof of Theorem \ref{T:Pseparableintro}.

We require some preliminary discussion for the statement of
Lemma \ref{L:sepreduction}.  First we recall that the operations (R1)--(R2) are
stated in the introduction.

Suppose
$\pi : G \to G'$ is a separable isogeny, where $G'$ be a reductive
algebraic group, i.e.\ $G$ is obtained from $G'$ by an (R1) operation.  Then $P' = \pi(P)$ is a parabolic subgroup
of $G$ and the proof of \cite[Proposition 2.8]{Ja} can be adapted to show that
the adjoint orbit map $P \to P \cdot X$ is separable
for all $X \in \p$ if and only if the adjoint orbit map $P' \to P' \cdot X'$ is separable
for all $X' \in \p'$.

Now suppose that $G' = G \times S$, where $S$ is a torus, i.e.\ $G$ is obtained from $G'$ by an (R2) operation.  Then $P' = P \times S$
is a parabolic subgroup of $G'$.  Further, since $S$ is contained in the centre of $G$,
it is clear that the adjoint orbit map $P \to P \cdot X$ is separable
for all $X \in \p$ if and only if the adjoint orbit map $P' \to P' \cdot X'$ is separable
for all $X' \in \p'$.

Iteration of the arguments in the previous two paragraphs proves
the following lemma.

\begin{lemma} \label{L:sepreduction}
Suppose that $G$ is obtained from the reductive algebraic group $G'$ by
a finite number of the operations {\em (R1)--(R2)}, and let $P'$ be the parabolic subgroup
of $G'$ corresponding to $P$.  Then the adjoint orbit map $P \to P \cdot X$ is separable
for all $X \in \p$ if and only if the adjoint orbit map $P' \to P' \cdot X'$ is separable
for all $X' \in \p'$.
\end{lemma}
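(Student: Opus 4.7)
The plan is to proceed by induction on the number of operations in a chain of (R1)/(R2) moves taking $G'$ to $G$. The base case is vacuous, and for the inductive step it suffices to treat a single operation. This reduces the lemma to exactly the two cases already discussed in the paragraphs immediately preceding its statement; the argument below is essentially a cleaner restatement of that material.

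For a single (R1) step, let $\pi : G_1 \to G_2$ be a separable isogeny and let $P_1, P_2$ be corresponding parabolic subgroups. Then $\pi$ restricts to a separable isogeny $P_1 \to P_2$ whose kernel lies inside the (central, finite) kernel of $\pi$, and $d\pi : \p_1 \to \p_2$ is a $P_1$-equivariant isomorphism of Lie algebras. Using the separability criterion $\Lie C_P(X) = \c_\p(X)$ recalled before Lemma \ref{L:dimcg}, one checks that the orbit map for $X \in \p_1$ is separable if and only if the orbit map for $d\pi(X) \in \p_2$ is separable; this is the argument of \cite[Proposition 2.8]{Ja} transferred from $G$ to $P$, using that $d\pi$ identifies the two centralizer Lie algebras and that the scheme-theoretic centralizers of $X$ and $d\pi(X)$ differ only by the central subgroup $\ker \pi$, which is already smooth.

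For a single (R2) step we have $G_1 = G_2 \times S$ with $S$ a torus, and correspondingly $P_1 = P_2 \times S$ with $\p_1 = \p_2 \oplus \Lie S$. Since $S$ lies in the centre of $G_1$, the adjoint action of $P_1$ is trivial on $\Lie S$, so for $X = (X_2, Z) \in \p_1$ we have $P_1 \cdot X = (P_2 \cdot X_2) \times \{Z\}$ and $C_{P_1}(X) = C_{P_2}(X_2) \times S$, and likewise $\c_{\p_1}(X) = \c_{\p_2}(X_2) \oplus \Lie S$. The condition $\Lie C_{P_1}(X) = \c_{\p_1}(X)$ is therefore equivalent to $\Lie C_{P_2}(X_2) = \c_{\p_2}(X_2)$, and separability for $P_1$ is equivalent to separability for $P_2$.

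The only mildly delicate point is bookkeeping: one must verify that the correspondence of parabolics is well defined along the chain. For (R1) this is $P_1 \leftrightarrow \pi(P_1)$, and for (R2) it is $P_2 \leftrightarrow P_2 \times S$; both preserve parabolicity and are inverse to one another in the obvious sense, so iterating them unambiguously associates to $P \subseteq G$ a parabolic $P' \subseteq G'$, and the induction closes. I do not expect any serious obstacle beyond this bookkeeping, because the separability criterion splits cleanly under both operations.
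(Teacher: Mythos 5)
Your proposal is correct and follows essentially the same route as the paper: the paper handles the single (R1) and (R2) steps in the two paragraphs immediately preceding the lemma (citing the adaptation of \cite[Proposition 2.8]{Ja} for the isogeny case and the centrality of $S$ for the torus case), and then states that iteration gives the lemma. Your write-up fills in a bit more of the computation under the separability criterion $\Lie C_P(X) = \c_\p(X)$ and makes the induction explicit, but the decomposition and the key observations are identical to the paper's.
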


Armed with Lemma \ref{L:sepreduction}, we are ready to proceed with the proof of Theorem \ref{T:Pseparableintro}.

\begin{proof}[Proof of Theorem \ref{T:Pseparableintro}]
By Lemma \ref{L:sepreduction} and the discussion at the end of the introduction,
we may assume that $G$ satisfies (H1)--(H3).
It follows from the arguments in \cite[\S2.6--2.9]{Ja}
that if $p$ is pretty good for $G$, then
$G$ can be obtained after a finite number of the
operations (R1)--(R2) from a reductive group $G'$ with a
faithful representation $\rho : G' \to \GL_n(k)$
such that restriction of the trace form on $\gl_n(k)$ to
$d\rho(\g')$ is nondegenerate.
Hence by Lemma \ref{L:sepreduction}, we may assume
that there is a faithful representation $\rho : G \to \GL_n(k)$
such that $d\rho : \g \to \gl_n(k)$ is injective and the restriction of the trace form on $\gl_n(k)$ to
$d\rho(\g)$ is nondegenerate.
We use $\rho$ to identify $G$ with a subgroup of $\bar G = \GL_n(k)$, and $d\rho$ to identify
$\g$ with a subalgebra of $\bar \g = \gl_n(k)$.

Recall that $I = \{\alpha \in \Pi \mid \g_\alpha \sub \l\}$ and let $m$ be the index
of the cocharacter group of $T$ in the group of coweights of $T$.
Let $\lambda : k^\times \to T$ be the unique cocharacter with image inside the derived subgroup of $G$ satisfying
$$
\langle \lambda, \alpha \rangle =
\begin{cases}
0 &\text{if } \alpha \in I  \\
m & \text{if } \alpha \in \Pi \setminus I,
\end{cases}
$$
where $\langle \cdot, \cdot \rangle$ is the pairing between cocharacters and characters of $T$.

Then $\lambda$ determines a grading
$$
\g = \bigoplus_{j \in \Z} \g(\lambda;j)
$$
where $\g(\lambda;j) = \{Y \in \g \mid \lambda(t) \cdot Y = t^j Y\}$.  We
have
$$
\p = \bigoplus_{j \in \Z_{\ge 0}} \g(\lambda;j) \quad \text{and} \quad \l = \g(\lambda;0).
$$
Similarly $\lambda$ defines a grading
$$
\bar \g = \bigoplus_{j \in \Z} \bar \g(\lambda;j)
$$
and we define
$$
\bar \p = \bigoplus_{j \in \Z_{\ge 0}} \bar \g(\lambda;j).
$$

Since the restriction of the trace form on $\bar \g$ to $\g$ is nondegenerate,
we may decompose $\bar \g = \g \oplus \tilde \g$ as a $G$-module, where $\tilde \g$ is the
orthogonal complement to $\tilde \g$ in $\bar \g$.  We let $\tilde \p = \tilde \g \cap \bar \p$.
Then we see that $\bar \p = \p \oplus \tilde \p$ as $P$-modules.

We write $\bar P$ for the parabolic subgroup of $\bar G$ with Lie algebra $\bar \p$.
Then we have that $C_{\bar P}(X)$ consists of the invertible elements of $\c_{\bar \p}(X)$.
Therefore, $\dim C_{\bar P}(X) = \dim \c_{\bar \p}(X)$, and thus $\Lie C_{\bar P}(X) = \c_{\bar \p}(X)$.

Let $f : \bar P \to \bar \p$ be the map defined by $f(g) = g \cdot X - X$.
The differential $(df)_1 : \bar \p \to \bar \p$ of $f$ at $1$ is given by $(df)_1(Y) = [Y,X]$.
We have
$$
\dim f(\bar P) = \dim \bar P \cdot X = \dim \bar P - \dim C_{\bar P}(X) = \dim \bar \p - \dim \c_{\bar \p}(X) = \dim([\bar \p,X]),
$$
and it follows that $T_0(f(\bar P)) = [\bar \p,X]$.

We have
\begin{align*}
T_0(f(P))
 & \sub T_0(f(\bar P)) \cap T_0(\p) \\
 & = [\bar \p,X] \cap \p \\
 & = [\p,X]\\
 &  \sub T_0(f(P)).
\end{align*}
The first inclusion here is immediate, whilst the first equality
follows from $T_0(f(\bar P)) = [\bar \p,X]$.  To see the second equality holds,
consider $\bar Y = Y + \tilde Y \in \bar \p$
with $Y \in \p$ and $\tilde Y \in \tilde \p$.
Then
$$
[\bar Y,X] = [Y + \tilde Y,X]
= [Y,X] + [\tilde Y,X]
$$
where $[Y,X] \in \p$ and $[\tilde Y,X] \in \tilde \p$ as
$\bar \p = \p \oplus \tilde \p$ is a $P$-module decomposition.
We deduce that $[\bar \p,X] \cap \p = [\p,X]$.  The last inclusion
holds because $[\p,X] = (df)_1(\p)$.

Hence, we have that $[\p,X] = T_0(f(P))$.
Therefore,
$$
\dim C_P(X)  = \dim P - \dim(P \cdot X) = \dim P -\dim f(P) =
\dim \p - \dim([\p,X]) = \dim \c_\p(X),
$$
which implies that $\Lie C_P(X) = \c_\p(X)$ and thus
that the orbit map $P \to P \cdot X$ is separable.
\end{proof}

\begin{remark}
The key ingredient in our proof of Theorem \ref{T:Pseparableintro}
is the existence of $\bar P \le \bar G$ such that
$P = \bar P \cap G$,
the adjoint orbit maps $\bar P \to \bar P \cdot X$ are separable, and
there is a $P$-module decomposition $\bar \p = \p \oplus \tilde \p$.
Thus the proof is applicable in other situations, for example we could
replace $P$ by any subgroup of $K$ of $G$ such that $K$ is normalized by $T$,
the restriction of the trace form to $d\rho(\t \cap \k)$ is nondegenerate,
and $\Phi(\k)$ is closed under addition within the set of
weights of $T$ in $\bar \g$.
\end{remark}

\section{On irreducibility of \texorpdfstring{$\cC(\p)$}{C(p)}}

The goal of this section is to prove Theorem \ref{T:main}.
Before getting to this proof, we give some preliminary results.
For the first lemma,
we define $\cC_\ss^\reg(\p)$ to be the subvariety of $\cC(\p)$ consisting of
pairs of commuting regular semisimple elements.

\begin{lemma}
\label{L:irrcomp}
$ $
\begin{itemize}
\item[(a)] $\cC_\ss^\reg(\p)$ is open
and irreducible in $\mathcal{C}(\mathfrak{p})$,
\item[(b)] the closure $\bar{\cC_\ss^\reg(\p)}$ of $\cC_\ss^\reg(\p)$
is an irreducible component of $\cC(\p)$, and
\item[(c)]  $\dim \cC_\ss^\reg(\p)=\dim \p+\rank G$.
\end{itemize}
\end{lemma}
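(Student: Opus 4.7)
The plan rests on the standard structural fact that every regular semisimple element of $\p$ is $P$-conjugate to an element of $\t^\reg$; granting this, $\cC_\ss^\reg(\p)$ coincides with the image of the morphism
$$
\phi \colon P \times \t^\reg \times \t^\reg \to \cC(\p), \qquad (p, X, Y) \mapsto (p \cdot X, p \cdot Y),
$$
and each of (a), (b) and (c) will fall out of this presentation.

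For (a), openness is immediate because $\g_\ss^\reg$ is open in $\g$ by Section \ref{S:notation}, whence
$$
\cC_\ss^\reg(\p) = \cC(\p) \cap (\g_\ss^\reg \times \g_\ss^\reg)
$$
is open in $\cC(\p)$; it is nonempty since $(X, X) \in \cC_\ss^\reg(\p)$ whenever $X \in \t^\reg$.  For irreducibility, given $(X, Y) \in \cC_\ss^\reg(\p)$, I would pick $p \in P$ with $p^{-1} \cdot X \in \t^\reg$ using the cited fact.  Then $[p^{-1} \cdot X, p^{-1} \cdot Y] = 0$, combined with regularity of $p^{-1} \cdot X$ and the root space decomposition, forces $p^{-1} \cdot Y \in \c_\g(p^{-1} \cdot X) = \t$, and regularity of $Y$ upgrades this to $p^{-1} \cdot Y \in \t^\reg$.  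Hence $\cC_\ss^\reg(\p) = \im \phi$ is irreducible as the image of an irreducible variety.  Part (b) is then immediate: an irreducible open subvariety of any variety has closure equal to a unique irreducible component.

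For (c), I would apply the fiber dimension theorem to $\phi$.  For $(X_0, Y_0) \in \t^\reg \times \t^\reg$, the fiber $\phi^{-1}(X_0, Y_0)$ consists of triples $(p, p^{-1} \cdot X_0, p^{-1} \cdot Y_0)$ with $p \in P$ and $p^{-1} \cdot X_0 \in \t$.  Separability of the $G$-adjoint orbit map (Theorem \ref{T:Pseparableintro} applied with $P = G$) gives $\Lie C_G(X_0) = \c_\g(X_0) = \t$, so $C_G(X_0)^\circ = T$; transporting this identity by conjugation then shows that any such $p$ must lie in $N_G(T) \cap P = N_P(T)$.  Since $N_P(T)/T$ is finite (being the Weyl group of the Levi $L$), the fiber has dimension $\dim N_P(T) = \rank G$, and the fiber dimension theorem yields
$$
\dim \cC_\ss^\reg(\p) = \dim(P \times \t^\reg \times \t^\reg) - \rank G = \dim P + 2\rank G - \rank G = \dim \p + \rank G.
$$

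The main technical point is the $P$-conjugacy fact underpinning the identification $\cC_\ss^\reg(\p) = \im \phi$: one must verify that any regular semisimple $X \in \p$ lies in the Lie algebra of some maximal torus of $P$, which should be a standard consequence of the pretty good prime hypothesis together with the structure theory of parabolics.  Once this is granted, the remaining steps are routine.
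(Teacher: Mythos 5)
Your proposal is correct and follows essentially the same route as the paper's proof: openness via $\p_\ss^\reg \times \p_\ss^\reg$ being open, irreducibility via surjectivity of the map $P \times \t^\reg \times \t^\reg \to \cC_\ss^\reg(\p)$, part (b) as an immediate consequence, and the dimension count via the fiber dimension theorem with fibers of dimension $\rank G$. The only cosmetic difference is in the fiber analysis: the paper argues directly at the Lie algebra level, deducing $g \cdot \t = g \cdot \c_\p(X) = \c_\p(X') = \t$ hence $g \in N_P(\t)$, which is slightly cleaner than your route through separability of the $G$-orbit map and $C_G(X_0)^\circ = T$ (and sidesteps the minor issue of comparing $N_G(T)$ with $N_G(\t)$). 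The "technical point" you flag at the end — that a regular semisimple $X \in \p$ lies in the Lie algebra of some maximal torus $T_1$ of $P$, from which the conjugacy statement follows — is indeed invoked in the paper, phrased there via choosing $T_1$ with $X \in \t_1$, noting $\c_\p(X) = \t_1$ so also $Y \in \t_1$, and using conjugacy of maximal tori of $P$.
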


\begin{proof}
  Then we can deduce that the variety of regular semisimple elements $\p_\ss^\reg = \p \cap \g_\ss^\reg$ in $\p$ is open in
$\p$.  It follows that $\p_\ss^\reg\times \p_\ss^\reg$ is open in $\p \times \p$.
Thus $\cC_\ss^\reg(\p) = \cC(\p) \cap( \p_\ss^\reg \times \p_\ss^\reg)$
is open in $\cC(\p)$.

Consider the map $\mu: P \times (\t^\reg \times \t^\reg) \to \cC_\ss^\reg(\p)$ defined by $\mu(g,X,Y)=(g \cdot X, g \cdot Y)$;
we recall that $\t^\reg$ is the set of regular elements in $\t$.
Let $(X,Y) \in \cC_\ss^\reg(\p)$.  There exists a maximal torus $T_1$ of $P$ such that $X \in \t_1$. We have
$\c_\p(X) = \t_1$, as $X$ is regular semisimple, so that $Y \in \t_1$ too.
Since maximal tori of $P$ are conjugate, it follows that $\mu$ is surjective.
Therefore, $\cC_\ss^\reg(\p)$ is irreducible.

Having proved $\cC_\ss^\reg(\p)$ is both open and irreducible, we deduce
that $\bar{\cC_\ss^\reg(\p)}$ is an irreducible component of $\cC(\p)$.

Now suppose that $\mu(g,X,Y)=\mu(1,X',Y')$ for some $g \in P$ and $X,Y,X',Y' \in \t^\reg$. Then $g \cdot X = X'$, so
$g \cdot \t = g \cdot \c_\p(X) = \c_\p(X') = \t$.
Hence, $g \in N_P(\t)$.
It follows that the dimension of each fibre of $\mu$ is
$\dim N_P(\t)= \dim \t = \rank G$.
Therefore,
\begin{align*}
\dim \cC_\ss^\reg(\p) = \dim P + \dim \t^\reg+ \dim \t^\reg -\dim \t =\dim \p + \rank G. & \qedhere
\end{align*}
\end{proof}

We have the following immediate corollary.

\begin{corollary}
$\cC(\p)$ is irreducible if and only if $\cC(\p)=\bar{\cC_\ss^\reg(\p)}$.
\end{corollary}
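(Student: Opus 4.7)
The plan is to derive this corollary as a direct consequence of Lemma \ref{L:irrcomp}, with essentially no additional work required beyond unwinding definitions. The two directions each follow from one part of the lemma.

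For the forward implication, I would assume $\cC(\p)$ is irreducible. Since any irreducible variety has itself as its unique irreducible component, and since Lemma \ref{L:irrcomp}(b) identifies $\bar{\cC_\ss^\reg(\p)}$ as an irreducible component of $\cC(\p)$, the two must coincide. This uses only part (b) of the lemma.

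For the reverse implication, I would assume $\cC(\p) = \bar{\cC_\ss^\reg(\p)}$. By Lemma \ref{L:irrcomp}(a), the set $\cC_\ss^\reg(\p)$ is irreducible, and the closure of an irreducible set is irreducible, so $\cC(\p)$ is irreducible. This uses only part (a) of the lemma (and the standard topological fact about closures of irreducible sets).

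There is no genuine obstacle here: the content of the corollary is entirely packaged in Lemma \ref{L:irrcomp}, and the proof will occupy only a couple of lines. The real work was already done in establishing that $\cC_\ss^\reg(\p)$ is nonempty, open, and irreducible, which was carried out via the surjective map $\mu: P \times (\t^\reg \times \t^\reg) \to \cC_\ss^\reg(\p)$ in the proof of the lemma.
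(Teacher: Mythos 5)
Your proof is correct and is precisely the immediate deduction the authors have in mind; the paper gives no explicit proof, stating only that the corollary is immediate from Lemma \ref{L:irrcomp}, and your two-direction argument via parts (a) and (b) of that lemma is the standard way to unwind it.
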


We note that Richardson proved that $\cC(\g)$ is irreducible by proving that $\cC(\g)=\bar{\cC_\ss^\reg(\g)}$  in \cite{Ri}; this is
 also the case for Levy's proof in \cite{Le} for positive characteristic.  In particular, we have
 $\dim \cC(\g) = \dim G + \rank G$.

Our next lemma gives a lower bound for the dimensions of irreducible components of $\cC(\p)$.

\begin{lemma} \label{L:compdim}
All irreducible components of $\cC(\p)$ have dimension at least $\dim \p + \rank G$.
\end{lemma}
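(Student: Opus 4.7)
The plan is to identify $\cC(\p)$ with the zero locus of a $\u_P$-valued morphism on an irreducible variety of known dimension, and then invoke Krull's height theorem. Using the Levi decomposition $\p = \l \oplus \u_P$, I write each $X \in \p$ uniquely as $X = X_\l + X_\u$. Since $\u_P$ is an ideal of $\p$, the $\l$-component of $[X,Y]$ is exactly $[X_\l, Y_\l]$, so whenever $[X,Y] = 0$ one has $(X_\l, Y_\l) \in \cC(\l)$. This yields a morphism
$$
\Psi \colon \cC(\p) \longrightarrow \cC(\l) \times \u_P \times \u_P, \qquad (X,Y) \mapsto \bigl((X_\l, Y_\l),\, X_\u,\, Y_\u\bigr),
$$
which is a closed embedding with polynomial inverse $((A,B), X_\u, Y_\u) \mapsto (A + X_\u,\, B + Y_\u)$ on its image.

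Next I would define the morphism $f \colon \cC(\l) \times \u_P \times \u_P \to \u_P$ by
$$
f\bigl((A,B), X_\u, Y_\u\bigr) = [A, Y_\u] + [X_\u, B] + [X_\u, Y_\u].
$$
Because $[A,B] = 0$ on $\cC(\l)$, this coincides with $[A + X_\u,\, B + Y_\u]$, and it lies in $\u_P$ since $\u_P$ is an ideal of $\p$; in particular $f$ really is $\u_P$-valued. A direct check confirms that $\Psi(\cC(\p)) = f^{-1}(0)$, so $\cC(\p) \cong f^{-1}(0)$ as varieties.

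For the third step, I appeal to the irreducibility of $\cC(\l)$. Since $p$ is pretty good for $G$ it is also pretty good for the Levi subgroup $L$, as noted in Section \ref{S:notation}. Levy's theorem, extended to pretty good primes as discussed in the introduction, then gives that $\cC(\l)$ is irreducible of dimension $\dim \l + \rank L = \dim \l + \rank G$. Hence the source $V := \cC(\l) \times \u_P \times \u_P$ is irreducible of dimension $\dim \l + \rank G + 2\dim \u_P$. Krull's height theorem now implies that every irreducible component of $f^{-1}(0)$ has codimension at most $\dim \u_P$ in $V$, and therefore dimension at least
$$
\dim V - \dim \u_P = \dim \l + \dim \u_P + \rank G = \dim \p + \rank G,
$$
which is the claimed bound.

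The only genuinely substantive ingredient is the irreducibility and dimension of $\cC(\l)$, which is Levy's theorem; the main subtlety in the argument itself is that one must pass to the Levi decomposition in order to cut $\cC(\p)$ out by $\dim \u_P$ equations rather than $\dim \p$ equations, since the naive codimension bound from $[\,\cdot\,,\,\cdot\,] \colon \p \times \p \to \p$ only yields $\dim \cC(\p) \geq \dim \p$ and falls short of the target by $\rank G$.
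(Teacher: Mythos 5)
Your argument is correct and matches the paper's proof essentially line for line: both use the Levi decomposition $\p = \l \oplus \u_P$ to embed $\cC(\p)$ as the zero locus of a $\u_P$-valued commutator map on the irreducible variety $\cC(\l) \times \u_P \times \u_P$, invoke Levy's/Richardson's theorem for the dimension of $\cC(\l)$, and apply the codimension bound (Krull height theorem / affine dimension theorem) to the $\dim\u_P$ defining equations. The only cosmetic difference is that you package the identification as an explicit isomorphism $\Psi$ where the paper simply names the ambient variety $\cL$.
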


\begin{proof}
Consider the Levi decomposition $\p =\l \oplus \u_P$.
Let $X,Y \in \p$, and write $X=X_1+X_2$, $Y=Y_1+Y_2$,
where $X_1,Y_1 \in \l$ and $X_2,Y_2 \in \u_P$. Then $[X,Y]=[X_1,Y_1]+[X_1,Y_2]+[X_2,Y_1]+[X_2,Y_2]$;
we note that $[X_1,Y_1] \in \l$ and the remaining terms are in $\u_P$.

Let $\cL$ be the subvariety of $\p \times \p$ of pairs $(X,Y)$ such that $(X_1,Y_1) \in \cC(\l)$ and $X_2,Y_2 \in \u_P$.
Then we have that $\cC(\p) \sub \cL$ and $\cL \cong \cC(\l) \times (\u_P \times \u_P)$.
Since $\cC(\l)$ is irreducible of dimension
$\dim \l + \rank L$, we have that $\cL$ is irreducible of dimension
$\dim \l + \rank G + 2\dim \u_P = \dim \p + \rank G + \dim \u_P$.

Consider the commutator map $F : \cL \to \u_P$ given by $F(X,Y) = [X,Y]$.
Then $\cC(\p)$ is the zero fibre of $F$,
so the codimension in $\cL$ of each irreducible component of $\cC(\p)$ is at most $\dim \u_P$.
Therefore, each irreducible component of $\cC(\p)$ has dimension at least
\begin{align*}
(\dim \p + \rank G+ \dim \u_P)-\dim \u_P = \dim \p + \rank G. & \qedhere
\end{align*}
\end{proof}

We are now ready to prove our main theorem.

\begin{proof}[Proof of Theorem \ref{T:main}]
In order to prove this theorem we decompose $\cC(\p)$ as a disjoint union of
irreducible subvarieties such that the closure of some of these subvarieties
are the irreducible components of $\cC(\p)$. This allows us to determine when $\cC(\p)$
is irreducible and is achieved by partitioning
the $P$-orbits in $\p$ in a way that generalizes the
partition of $G$ orbits in $\g$ into decomposition classes as given by Borho and
Kraft in \cite{BK}.  This is similar to the approach used by Popov
when considering $\cC(\g)$ in \cite{Po}.

Let $\hat \cH$ denote the set of all Levi subgroups $H$ of $G$ containing $T$.
We note that $H \in \hat \cH$ is determined by the set of $\alpha \in \Phi$
such that $\g_\alpha \sub \h$.
Thus we see that $\hat \cH$ is a finite set.
We note that different Levi subgroups in $\hat \cH$ may be conjugate under $P$, and we choose
$\cH$ to be a subset of $\hat \cH$ containing one representative from
each $P$-conjugacy class.

Let $H \in \cH$.
We define
$\z(\h)^\reg = \{X \in \z(\h) \mid \c_\g(X) = \h\}$. We have
that $\z(\h)^\reg$ is open in $\z(\h)$, so $\z(\h)^\reg$ is irreducible and $\dim \z(\h)^\reg = \dim \z(\h) =  \rank G - \ssrank H$.

We have that $P \cap H$ is a parabolic subgroup of $H$, and we also consider $N_P(P \cap H)$.
We note that $P \cap H$ has finite index in $N_P(P \cap H)$, because any coset
in $N_P(P \cap H)/(P \cap H)$ can be chosen to have a representative that normalizes $T$.  Thus $\dim N_P(P \cap H)
= \dim(P \cap H)$.

We write $\cS_H = \cS(P \cap H,\p \cap \h)$
for the set of sheets of $P \cap H$ on $\cN(\p \cap \h)$
and $\cS_{H,j} = \cS(P \cap H,\cN(\p \cap \h))_j$.
So we have $\cS_H = \bigsqcup_{j \in \Z_{\ge 0}} \cS_{H,j}$.
We note that $N_P(P \cap H)$ acts of $\cN(\p \cap \h)$, and this gives rise
to an action of $N_P(P \cap H)/(P \cap H)$ on $\cS_H$.

Let $X \in \p$ with Jordan decomposition $X = X_s + X_n$.  Up to
the adjoint action of $P$ we may assume that $X_s \in \z(\h)^\reg$
for some $H \in \cH$.  Then we have that $X_n \in \cN(\p \cap \h)$, so
$X_n \in S$ for some $S \in \cS_H$.

Let $S \in \cS_{H,j}$, where $j \in \Z_{\ge 0}$.  We define
$\cM_{H,S} \sub \p$ to be the variety of all $X = X_s + X_n \in \p$
such that $X_s \in \z(\h)^\reg$ and $X_n \in S$.  We have $\cM_{H,S} \cong \z(\h)^\reg \times S$, so $\cM_{H,S}$ is irreducible
and $\dim \cM_{H,S} = \rank G - \ssrank H + \dim S$.

Let $\cC'_{H,S} = \{(X,Y) \mid X \in \cM_{H,S}, Y \in \c_\p(X)\}$.
For $X  \in \cM_{H,S}$ with Jordan decomposition $X = X_s + X_n$,
we have $\c_\p(X) = \c_\p(X_s) \cap \c_\p(X_n) = \c_{\p \cap \h}(X_n)$.
Thus $\dim \c_\p(X) = \dim \c_{\p \cap \h}(X_n)
= \dim(\p \cap \h) - j$, where for the last equality we
require Theorem \ref{T:Pseparableintro}.
Therefore, the dimension of $\c_\p(X)$ does not depend
on the choice of $X \in \cM_{H,S}$.

Let $\cX$ be an irreducible component of
$\cC'_{H,S}$ and consider the morphism
$\pi : \cX \to \cM_{H,S}$ given by projecting on to the first component.
The function taking $(X,Y) \in \cX$ to the maximal dimension
of an irreducible component of $\pi^{-1}(X,Y)$ containing $(X,Y)$
is upper semi-continuous, see for example \cite[\S8 Corollary 3]{Mum}.
Thus the set of $X \in \cM_{H,S}$ such that
$\{X\} \times \c_\p(X) \sub \cX$ is closed in $\cM_{H,S}$; here we
require that $\dim \c_\p(X)$ does not depend on $X \in \cM_{H,S}$ as
proved above.
Combining this
with the irreducibility $\cM_{H,S}$ allows us to deduce
that $\cC'_{H,S}$ is irreducible.

Also we note that, for any $X \in \cM_{H,S}$, we have
\begin{align} \label{e:C'_S}
\dim \cC'_{H,S} &= \dim \cM_{H,S} + \dim \c_\p(X) \nonumber \\
&= \rank G - \ssrank H + \dim S + \dim(\p \cap \h) - j \nonumber \\
&= (\rank G - \ssrank H) + \dim(\p \cap \h) + (\dim S - j).
\end{align}

We define $\cC_{H,S} = P \cdot \cC'_{H,S} = \{(g \cdot X, g \cdot Y) \mid g \in P, (X,Y) \in \cC'_{H,S}\}$
to be the $P$-saturation of $\cC'_{H,S}$.
Then we have that $\cC_{H,S}$ is irreducible being the image
of the morphism $\phi : P \times \cC'_S \to \cC(\p)$ given by $\phi(g,(X,Y)) = (g \cdot X, g \cdot Y)$.
For $S' \in \cS_H$, we see that $\cC_{H,S} = \cC_{H,S'}$ if and only if $S$ and $S'$ lie in the same
$N_P(P \cap H)/(P \cap H)$-orbit.

We claim that
\begin{equation} \label{e:C_S}
\dim \cC_{H,S} = (\rank G - \ssrank H) + \dim \p + (\dim S - j).
\end{equation}
To prove this we consider the dimension of the fibres of the morphism $\phi : P \times \cC'_{H,S} \to \cC_{H,S}$
defined above.  We note that the dimension of these fibres is constant on $P$-orbits,
so it suffices to determine $\dim \phi^{-1}(X,Y)$ for $(X,Y) \in \cC'_{H,S}$.

Let $(X,Y) \in \cC'_{H,S}$.  We note that if $g \in P \cap H$, then
$(g,(g^{-1} \cdot X,g^{-1} \cdot Y)) \in \phi^{-1}(X,Y)$.  Now suppose that $(g,X',Y')) \in \phi^{-1}(X,Y)$,
so we have $X = g \cdot X'$ and $Y = g \cdot Y'$.
We have Jordan decompositions $X = X_s + X_n$ and $X' = X'_s + X'_n$, where $X_s, X'_s \in \z(\h)^\reg$,
because $X,X' \in \cM_S$.
Also we have $X_s = g \cdot X'_s$, and thus $\c_\p(X_s) = g \cdot \c_\p(X'_s)$.  Since,
$\c_\p(X_s) = \p \cap \h = \c_\p(X'_s)$, we have $g \in N_P(\p \cap \h) = N_P(P \cap H)$.
Hence, we have shown that
$$
P \cap H \sub \{g \in P \mid (g,(g^{-1} \cdot X,g^{-1} \cdot Y)) \in \phi^{-1}(X,Y)\} \sub N_P(P \cap H),
$$
which implies that $\dim \phi^{-1}(X,Y) = \dim(P \cap H)$.

We have seen that the dimension of each fibre of $\phi$ is equal to $\dim(P \cap H)$.
Hence, we have
$\dim \cC_{H,S} = \dim P + \dim \cC'_{H,S} - \dim(P \cap H)$ and substituting from \eqref{e:C'_S} gives
\eqref{e:C_S}.

Since $S$ is a sheet for the action of $P \cap H$ on
$\cN(\p \cap \h)$, we deduce that
\begin{equation} \label{e:C_S2}
\dim \cC_{H,S} \le (\rank G - \ssrank H) + \dim \p + \mod(P \cap H : \cN(\p \cap \h)).
\end{equation}

By construction we have the disjoint union
$$
\cC(\p) = \bigsqcup_{\substack{H \in \cH \\ S = \dot \cS_H}} \cC_{H,S},
$$
where $\dot \cS_H$ denotes a set of representatives for the
$N_P(P \cap H)/(P \cap H)$-orbits in $\cS_H$.
Moreover, the closure $\bar{\cC_{H,S}}$ of each $\cC_{H,S}$ is closed and irreducible.
Thus the irreducible
components of $\cC(\p)$ are given by some of the $\bar{\cC_{H,S}}$.

We have $\cC_{T,\{0\}}$ contains $\cC^\reg_\ss(\p)$ as an open subset, so that $\bar{\cC_{T,\{0\}}} = \bar{\cC^\reg_\ss(\p)}$
is an irreducible component by Lemma \ref{L:irrcomp}.

Now suppose that $\mod(P\cap H : \cN(\p \cap \h))< \ssrank H$ for
all $H \in \cH \setminus \{T\}$.  Then we have $\dim \bar{\cC_{H,S}} = \dim \cC_{H,S} < \dim \p + \rank G$
for all $S \in \cS_H$ and therefore $\bar{\cC_{H,S}}$ is not an irreducible component of $\cC(\p)$ by Lemma \ref{L:compdim}.
Therefore, $\bar{\cC^\reg_\ss(\p)}$ is the only irreducible component of $\cC(\p)$ and hence $\cC(\p)$
is irreducible.

Conversely, suppose that there is $H \in \cH \setminus \{T\}$ such that $\mod(P\cap H : \cN(\p\cap \h)) \ge \ssrank H$.
Then there is $S \in \cS_{H,j}$ for some $j \in \Z_{\ge 0}$ such that
$\dim S - j \ge \ssrank H$.  Then we have $\dim \bar{\cC_{H,S}} = \dim \cC_{H,S} \ge \rank G + \dim \p$.
Since $\cC_{H,S} \cap \cC^\reg_\ss(\p) = \varnothing$, and $\dim \cC_{H,S} \ge \dim \cC^\reg_\ss(\p)$,
we have $\bar{\cC_{H,S}} \nsubseteq \bar{\cC^\reg_\ss(\p)}$.  Hence, $\cC(\p)$ is not irreducible
by Lemma \ref{L:irrcomp}.

Finally, it is immediate from Lemma \ref{L:irrcomp} that $\dim \cC(\p) = \dim \p + \rank G$ when
$\cC(\p)$ is irreducible.
\end{proof}

We have the following immediate corollary, which gives a monotonicity
for irreducibility of $\cC(\p)$.

\begin{corollary} \label{C:levi}
Suppose that $\cC(\p)$ is irreducible
and let $H$ be a Levi subgroup of $G$ containing $T$.
Then $\cC(\p \cap \h)$ is irreducible.
\end{corollary}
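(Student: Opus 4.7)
The plan is to deduce the corollary by applying Theorem \ref{T:main} to the reductive group $H$ and its parabolic subgroup $P\cap H$. Irreducibility of $\cC(\p)$ gives, via Theorem \ref{T:main}, that $\mod(P\cap H':\cN(\p\cap\h'))<\ssrank H'$ for every Levi subgroup $H'\ne T$ of $G$ containing $T$. The goal is to show that the analogous condition, stated inside $H$, holds for $P\cap H$, and then to invoke Theorem \ref{T:main} in the reverse direction.

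First I would set up the hypotheses for $H$: note that $P\cap H$ is a parabolic subgroup of $H$ with Lie algebra $\p\cap\h$, and that the assumption on $\Char k$ is inherited by $H$, since $p$ pretty good for $G$ implies $p$ pretty good for any Levi subgroup of $G$, as noted in Section \ref{S:notation}. Also $T$ is a maximal torus of $H$ contained in $P\cap H$, so the setup of Theorem \ref{T:main} is available for $(H, P\cap H, T)$.

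The key step is to identify Levi subgroups of $H$ containing $T$ with a subclass of Levi subgroups of $G$ containing $T$. Write $H=C_G(S)$ for some torus $S\subseteq T$. If $H'\subseteq H$ is any Levi subgroup of $H$ containing $T$, then $H'=C_H(S')$ for some torus $S'$ in $H$; since $T$ is a maximal torus of $H'$, we have $S'\subseteq Z(H')\cap T\subseteq T$. A direct computation gives
\[
H' \;=\; C_H(S') \;=\; C_G(S)\cap C_G(S') \;=\; C_G(S\cdot S'),
\]
so $H'$ is a Levi subgroup of $G$ containing $T$. For such an $H'$, one has the identities $(P\cap H)\cap H'=P\cap H'$ and $(\p\cap\h)\cap\h'=\p\cap\h'$, so the modality condition required for $\cC(\p\cap \h)$ to be irreducible, namely $\mod((P\cap H)\cap H':\cN((\p\cap\h)\cap\h'))<\ssrank H'$ for all Levi subgroups $H'\ne T$ of $H$ containing $T$, is a special case of the condition coming from irreducibility of $\cC(\p)$.

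Applying Theorem \ref{T:main} to $H$ and $P\cap H$ then yields that $\cC(\p\cap\h)$ is irreducible. No step is genuinely hard: the only point that requires a brief verification is the identification of Levi subgroups of $H$ containing $T$ with Levi subgroups of $G$ containing $T$, which follows from the centralizer-of-torus characterization; once this is in place the corollary is immediate from Theorem \ref{T:main}.
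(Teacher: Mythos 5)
Your proposal is correct and is exactly what the paper means when it calls this an immediate consequence of Theorem~\ref{T:main}: the paper gives no further proof, so the content is precisely the observation that Levi subgroups of $H$ containing $T$ form a subclass of Levi subgroups of $G$ containing $T$, with $(P\cap H)\cap H' = P\cap H'$, so the modality condition for $(H, P\cap H)$ is inherited from that for $(G,P)$. Your verification that $H' = C_H(S') = C_G(SS')$ is a Levi subgroup of $G$ and that the characteristic hypothesis passes to $H$ supplies exactly the details the paper leaves implicit.
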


In fact, the commuting variety of $\p$ gets much more complicated
as $\rank G - \ssrank L$ gets large, as explained, in particular for
the case $P = B$, in the next remark.

\begin{remark}
Let $H$ be a Levi subgroup of $G$ containing $T$
and let $\cX$ be an irreducible component of $\cC(\p \cap \h)$.
We remark that the methods in our proof of Theorem \ref{T:main} can be extended
to show that the closure of $P \cdot \cX = \{(g \cdot X,g \cdot Y) \mid
g \in P, (X,Y) \in \cX\}$ is an irreducible component of $\cC(\p)$.

Let $P = B$ be a Borel subgroup of $G$.  In this case $\cN(\b) = \u$,
and as a consequence of \cite[Theorem 3.1]{Ro1}, we have that $\mod(B:\u) \to \infty$ quadratically as $\ssrank G \to \infty$.
We may choose inside $G$ a sequence of Levi subgroup $1 = H_0 \le \dots \le H_{\ssrank G} = G$
such that $\ssrank H_j = j$ for each $j$.  Then we can see that for each $j$ such that $j \le \mod(B \cap H_j :\u \cap \h_j) <
\mod(B \cap H_{j+1} :\u \cap \h_{j+1})-1$, there are more irreducible components of $\cC(\b \cap \h_{j+1})$ than of $\cC(\b \cap \h_j)$.
Since $\mod(B:\u) \to \infty$ quadratically as $\ssrank G \to \infty$, we can deduce that the number of irreducible
components of $\cC(\b)$ gets arbitrarily large as $\ssrank G \to \infty$.  Indeed the number
of components grows at least linearly in the rank, and the dimension of some irreducible components
get much larger than $\dim \b + \rank G$.
\end{remark}

We end this section by recording a useful inductive result
regarding irreducibility of $\cC(\p)$.

\begin{proposition} \label{P:paracontain}
Let $P$ and $Q$ be parabolic subgroups of $G$.  Suppose that $P \le Q$.  Then $\cC(\q) = Q \cdot \cC(\p)$.
In particular, if $\cC(\p)$ is
irreducible, then $\cC(\q)$ is irreducible.
\end{proposition}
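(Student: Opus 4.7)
The plan is to show $\cC(\q) \sub Q \cdot \cC(\p)$; the reverse inclusion is immediate from $\cC(\p) \sub \cC(\q)$ and the $Q$-stability of $\cC(\q)$. I will first establish the statement when $Q = G$ is reductive, and then deduce the general case via the Levi decomposition of $\q$.

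For the reductive case, note that $G \cdot \cC(\p)$ is the image of the proper morphism $G \times^P \cC(\p) \to \cC(\g)$ mentioned in the introduction (properness follows from projectivity of $G/P$), so $G \cdot \cC(\p)$ is closed in $\cC(\g)$. Any regular semisimple pair $(X,Y) \in \cC_\ss^\reg(\g)$ satisfies $Y \in \c_\g(X) = \t_X$ for some Cartan subalgebra $\t_X$, so both $X$ and $Y$ lie in $\t_X$; by conjugacy of maximal tori there exists $g \in G$ with $g \cdot \t_X = \t \sub \p$, giving $(X,Y) \in G \cdot \cC(\p)$. Hence $\cC_\ss^\reg(\g) \sub G \cdot \cC(\p)$, and since $\cC(\g) = \bar{\cC_\ss^\reg(\g)}$ by Richardson--Levy, taking closures yields $\cC(\g) = G \cdot \cC(\p)$.

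For general $Q$, fix a Levi decomposition $Q = L_Q U_Q$ with $T \le L_Q$, and let $L_P \le L_Q$ be the Levi factor of $P$ containing $T$. Since $P \le Q$, the unipotent radicals satisfy $\u_Q \sub \u_P \sub \p$, and $P \cap L_Q$ is a parabolic subgroup of the reductive group $L_Q$ with Levi $L_P$. Take $(X,Y) \in \cC(\q)$ and decompose $X = X_L + X_U$, $Y = Y_L + Y_U$ along $\q = \l_Q \oplus \u_Q$. Because $\u_Q$ is an ideal of $\q$, the relation $[X,Y] = 0$ forces $[X_L,Y_L] = 0$, so $(X_L,Y_L) \in \cC(\l_Q)$. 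Applying the reductive case to $L_Q$ and $P \cap L_Q$, there is $g \in L_Q \le Q$ with $g \cdot X_L,\, g \cdot Y_L \in \p \cap \l_Q \sub \p$; since $g$ normalizes $\u_Q$ and $\u_Q \sub \p$, we also get $g \cdot X_U,\, g \cdot Y_U \in \p$, so $g \cdot (X,Y) \in \cC(\p)$. The irreducibility statement follows at once: $Q \cdot \cC(\p)$ is the image of the irreducible variety $Q \times \cC(\p)$ under the diagonal action, so $\cC(\q) = Q \cdot \cC(\p)$ is irreducible whenever $\cC(\p)$ is. The only substantive point is the reductive case, where the key maneuver is the simultaneous conjugation of both members of a regular semisimple commuting pair into $\t \sub \p$ via the Cartan subalgebra they jointly inhabit.
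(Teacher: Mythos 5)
Your proof is correct, but it takes a genuinely different route from the paper's. The paper's argument is pointwise and does not pass through the reductive case: given $(X,Y)\in\cC(\q)$, it takes Jordan decompositions $X=X_s+X_n$, $Y=Y_s+Y_n$, observes that $X_n,Y_n$ lie in $\h=\Lie(C_G(X_s)\cap C_G(Y_s))$ with $H$ a Levi subgroup, applies an $H$-equivariant Springer isomorphism from \cite[Corollary 5.5]{He} to transfer the commuting pair of nilpotents to a commuting pair of unipotents, and then uses the fact that two commuting unipotent elements lie in a common Borel subgroup of $Q\cap H$, which can be conjugated (within $Q\cap H$) into $P\cap H$. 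By contrast, you first handle $Q=G$ via the properness of $G\times^P\cC(\p)\to\g\times\g$ (so that $G\cdot\cC(\p)$ is closed) together with the Richardson--Levy fact that $\cC(\g)=\bar{\cC_\ss^\reg(\g)}$, and then reduce the general case to the reductive one through the Levi decomposition $\q=\l_Q\oplus\u_Q$ and the inclusion $\u_Q\sub\u_P\sub\p$. Both approaches rest on nontrivial inputs valid under the paper's hypotheses (the Springer isomorphism on one side, Richardson--Levy irreducibility on the other); the paper's proof gives an explicit conjugating element for each pair and avoids any closure argument, while yours replaces the Springer-isomorphism machinery with a clean reduction-plus-density argument and makes transparent how the statement rests on irreducibility of $\cC$ of a reductive Lie algebra.
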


\begin{proof}
Let $(X,Y) \in \cC(\q)$ with Jordan decompositions $X = X_s+X_n$ and $Y = Y_s + Y_n$.
Then $X_n,Y_n \in \c_\g(X_s) \cap \c_\g(Y_s)$, which is the Lie
algebra of the Levi subgroup $H = C_G(X_s) \cap C_G(Y_s)$ of $G$.
By \cite[Corollary 5.5]{He}, there is a Springer isomorphism
$\phi : \cN(\h) \to \cU(H)$, where $\cU(H)$ denotes the unipotent
variety of $H$.  Since $\phi$ is $H$-equivariant we
deduce that $\phi(X_n),\phi(Y_n)$ commute.  Therefore,
$\phi(X_n)$ and $\phi(Y_n)$ lie in a Borel subgroup $B_H$ of $Q \cap H$.
Since Borel subgroups of $Q \cap H$ are conjugate, there exists $g \in Q \cap H$ such that $gB_Hg^{-1} \sub P \cap H$.
By $H$-equivariance, we have that $\phi$ sends $\cN(\h) \cap \p$ to $\cU(H) \cap P$ and thus
that $(g \cdot X, g \cdot Y) \in \cC(\p)$.  Hence, $\cC(\q) = Q \cdot \cC(\p)$.

Suppose $\cC(\p)$ that is irreducible. Then $\cC(\q)$ is the image of
the irreducible variety $Q \times \cC(\p)$ under the morphism $(g,(X,Y)) \mapsto (g \cdot X, g \cdot Y)$,
and thus is irreducible.
\end{proof}

We end this section with a discussion of when the commuting variety $\cC(\p)$ is equidimensional.
By Lemma \ref{L:irrcomp}, if $\cC(\p)$ is equidimensional then it must have dimension $\dim \p + \rank G$.
Now the proof of Theorem \ref{T:main} can be adapted to show that $\cC(\p)$ is equidimensional if and only if
$\mod(P\cap H : \cN(\p\cap \h)) \le \ssrank H$ for all Levi subgroups $H \ne T$ of $G$ containing $T$.

In the case $P = B$, it is now natural to consider whether $\cC(\b)$ is a complete intersection, as is
done in \cite[Section 6]{Ke}, and the argument in the last paragraph of the proof of \cite[Lemma 6.4]{Ke}
suffices to show that $\cC(\b)$ is a complete intersection whenever it is equidimensional.  The reason to consider
this question just for $P = B$ is that in this case the image of the commutator map $\b \times \b \to \b$ lies in
$\u$.  Whereas for general $P$ there is not a subspace of codimension $\rank G$ in $\p$ in which the image
of the commutator map $\p \times \p \to \p$ lies.

We remark that after this paper was completed, the paper \cite{Ba} of Basili appeared.  In {\em loc.\ cit.\ }
the question of whether $\cC(\b)$ is a complete intersection is considered in the case $G = \GL_n(k)$, and results
similar to those in this paper are obtained.

\section{On normality of \texorpdfstring{$\cC(\p)$}{C(p)}} \label{S:normality}

In this section we consider whether $\cC(\p)$ is a normal variety in the case where it is irreducible.
This question was considered by Keeton for the case $P = B$ in \cite[Section 6.2]{Ke}, and
we take a similar approach here.

We work with the commuting scheme $\cCS(\p)$, which is the subscheme of $\p \times \p$ defined by the
ideal $I(\p)$ of $k[\p \times \p]$ generated by the regular functions
$(X,Y) \mapsto f([X,Y])$ for $f \in \p^*$.   We let
$A(\p) = k[\p \times \p]/I(\p)$ be the algebra underlying $\cCS(\p)$.
The commuting scheme $\cCS(\l)$ of $\l$
is defined similarly, as are the ideal $I(\l)$ of $k[\l \times \l]$ and the algebra $A(\l) =
k[\l \times \l]/I(\l)$.

Our results depend on whether $A(\l)$ is Cohen--Macaulay,
so we include this as a hypothesis in the statements of our results.
Also for the case $P = B$, we have $\l = \t$ and $A(\t) = k[\t \times \t]$, which
is certainly Cohen--Maucaulay.  Further, we only consider cases where $\cC(\p)$ is
irreducible.

We start by showing that $\cCS(\p)$ is a Cohen--Macaulay  under these assumptions.

\begin{proposition} \label{P:CM}
Assume that $A(\l)$ is Cohen--Macaulay and that $\cC(\p)$ is irreducible.
Then $A(\p)$ is Cohen--Macaulay.
\end{proposition}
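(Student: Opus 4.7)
The plan is to realise $A(\p)$ as a quotient of an auxiliary Cohen--Macaulay graded algebra $B$ by exactly $\dim \u_P$ homogeneous relations, and then invoke the standard commutative algebra fact that in a Cohen--Macaulay graded $k$-algebra, a sequence of homogeneous positive-degree elements whose quotient has the ``expected'' codimension is automatically a regular sequence, whence the quotient inherits Cohen--Macaulayness.

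To build $B$, I would exploit the Levi decomposition $\p = \l \oplus \u_P$ to split the bracket. Writing $X = X_1 + X_2$ and $Y = Y_1 + Y_2$ with $X_1, Y_1 \in \l$ and $X_2, Y_2 \in \u_P$, the $\l$-component of $[X,Y]$ is $[X_1, Y_1]$ and the $\u_P$-component is $[X_1,Y_2]+[X_2,Y_1]+[X_2,Y_2]$, using $[\l,\u_P] \sub \u_P$. After choosing bases of $\l$ and $\u_P$, this produces homogeneous quadratic polynomials $g_1, \dots, g_{\dim \l} \in k[\l \times \l]$ generating $I(\l)$, together with $f_1, \dots, f_{\dim \u_P} \in k[\p \times \p]$, such that together these generate $I(\p)$ inside $R := k[\p \times \p] \cong k[\l \times \l] \otimes_k k[\u_P \times \u_P]$. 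Since the $g_j$ involve only $\l$-variables, the intermediate quotient
\[
B := R/(g_1, \dots, g_{\dim \l}) \cong A(\l) \otimes_k k[\u_P \times \u_P]
\]
is Cohen--Macaulay, because $A(\l)$ is Cohen--Macaulay by hypothesis and tensoring a finitely generated Cohen--Macaulay $k$-algebra with a polynomial ring over $k$ preserves this property. One then has $A(\p) = B/(\bar{f}_1, \dots, \bar{f}_{\dim \u_P})$.

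The decisive input is the dimension count. By Richardson--Levy, $\cC(\l)$ is irreducible of dimension $\dim \l + \rank L = \dim \l + \rank G$, so $\dim A(\l) = \dim \l + \rank G$ and hence $\dim B = \dim \p + \rank G + \dim \u_P$. On the other hand, Theorem \ref{T:main} combined with the irreducibility hypothesis on $\cC(\p)$ yields $\dim A(\p) = \dim \cC(\p) = \dim \p + \rank G$. Therefore $\dim A(\p) = \dim B - \dim \u_P$, precisely the expected drop for $\dim \u_P$ relations. Since everything is homogeneous with the standard grading and $B$ is graded Cohen--Macaulay, this dimension equality forces $(\bar{f}_1, \dots, \bar{f}_{\dim \u_P})$ to be a regular sequence in $B$, and so $A(\p)$ is Cohen--Macaulay. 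I expect the only delicate point to be securing the sharp upper bound $\dim A(\p) \le \dim \p + \rank G$: Lemma \ref{L:compdim} yields only the opposite inequality, so irreducibility of $\cC(\p)$ (via Theorem \ref{T:main}) is essential here; once this is in hand the Cohen--Macaulay property propagates through the two-stage quotient more or less mechanically.
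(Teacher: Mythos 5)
Your argument is correct and mirrors the paper's proof almost step for step: both pass to the intermediate Cohen--Macaulay algebra $A(\l) \otimes_k k[\u_P \times \u_P]$, cut it down by the $|\Phi(\u_P)| = \dim \u_P$ bilinear functions coming from the $\u_P$-component of the bracket, and use the dimension count (supplied by irreducibility of $\cC(\p)$, via Theorem \ref{T:main}, together with the computation of $\dim \cL$ from Lemma \ref{L:compdim}) to force these elements to be a regular sequence, exactly as in \cite[Propositions 18.9, 18.13]{Ei}. No meaningful difference.
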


\begin{proof}
Let $J(\l)$ be the ideal of $A(\p)$ generated by $I(\l)$, and let
$B(\l) = k[\p \times \p]/J(\l)$.  We have $k[\p \times \p] \cong
k[\l \times \l] \otimes k[\u_P \times \u_P]$, and under this identification
$J(\l) = I(\l) \otimes k[\u_P \times \u_P]$.  Thus we have
that $B(\l) \cong (k[\l \times \l]/I(\l)) \otimes k[\u_P \times \u_P]$, which is Cohen--Macaulay by
\cite[Proposition 18.9]{Ei}.

For $\alpha \in \Phi(\u_P)$, we define $p_\alpha \in \u_P^*$ by $p_\alpha(\sum_{\beta \in \Phi(\u_P)} a_\beta E_\beta) = a_\alpha$
and $f_\alpha \in k[\p \times \p]$ by
$f_\alpha(X,Y) = p_\alpha([X,Y])$.
Let $J(\p)$ be the image of $I(\p)$ in $B(\l)$.
Since $I(\p)$ is generated by the set of $f_\alpha$ for $\alpha \in \Phi(\p)$,
we have that  $J(\p)$
is generated by the images of the $f_\alpha$ in $B(\l)$.
Further, we have that $A(\p) \cong B(\l)/J(\p)$.

The zero set of $J(\l)$ in $\p \times \p$ is $\cL$ as defined
in the proof of Lemma \ref{L:compdim} and has dimension
$\dim \l + \rank G + 2\dim \u_P$.  Our assumption that
$\cC(\p)$ is irreducible implies that it has dimension $\dim \p + \rank G = \dim \cL - \dim \u_P$.
Therefore, as $J(\p)$ is generated by $\dim \u_P = |\Phi(\u_P)|$ many elements and $B(\l)$ is Cohen--Macaulay,
we deduce that $A(\p) \cong B(\l)/J(\p)$ is Cohen--Macaulay, see \cite[Proposition 18.13]{Ei}.
\end{proof}

We remark that the only place that we use the fact that $\cC(\p)$ is irreducible in the proof above
is in saying that the dimension is $\dim \p + \rank G$.  Therefore, the proof is also valid
under the assumption that $\cC(\p)$ is equidimensional, because in this case it has dimension
$\dim \p + \rank G$ by Lemma \ref{L:irrcomp}.

In Theorem \ref{T:normality} we give a sufficient condition for $A(\p)$ to be normal.  Before we
move on to this it is helpful to make some observations about smooth points in $\cCS(\p)$,
which lead to Lemma \ref{L:regsmooth}.

Let $(X,Y) \in \cC(\p)$ considered as a closed point of $\cCS(\p)$.
The (scheme theoretic) tangent space of $\cCS(\p)$ at $(X,Y)$ is
$$
T_{(X,Y)}(\cCS(\p)) = \{(W,Z) \in \p \times \p \mid [X,Z]+[W,Y] = 0\}.
$$
Suppose that $\cC(\p)$ is irreducible so that $\dim \cC(\p) = \dim \p + \rank G$.
Then we have that $(X,Y)$ is a smooth point of $\cCS(\p)$ if and only if $\dim T_{(X,Y)}(\cCS(\p)) =
\dim \p + \rank G$, or equivalently $\dim([\p,X]+[\p,Y]) = \dim \p - \rank G$.
We recall that $X \in \p$ is regular if and only if $\dim \c_\p(X) = \rank G$, so that
$\dim([\p,X]) = \dim \p - \rank G$.  Therefore, if $X$ is regular, then
$\dim([\p,X]+[\p,Y]) = \dim \p - \rank G$.

In the previous paragraph, we have proved
the following lemma.

\begin{lemma} \label{L:regsmooth}
Assume that
that $\cC(\p)$ is irreducible.  Let $(X,Y) \in \cC(\p)$ considered as a closed point of $\cCS(\p)$ and
suppose that $X$ (or $Y$) is regular in $\p$.  Then $(X,Y)$ is a smooth point of $\cCS(\p)$.
\end{lemma}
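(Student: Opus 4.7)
The plan is to identify the scheme-theoretic tangent space of $\cCS(\p)$ at $(X,Y)$ explicitly, show its dimension is at most $\dim\p + \rank G$ using the regularity hypothesis, and then note that the matching lower bound is automatic from the assumed irreducibility of $\cC(\p)$.

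First I would linearize the defining equations of $\cCS(\p)$. Since $\cCS(\p)$ is cut out in $\p \times \p$ by the regular functions $(X,Y) \mapsto f([X,Y])$ for $f \in \p^*$, the scheme-theoretic tangent space at $(X,Y)$ is
\[
T_{(X,Y)}(\cCS(\p)) = \{(W,Z) \in \p \times \p \mid [X,Z] + [W,Y] = 0\},
\]
namely the kernel of the linear map $\p \times \p \to \p$, $(W,Z) \mapsto [X,Z] + [W,Y]$, whose image is $[\p,X] + [\p,Y]$. Rank-nullity then reduces smoothness at $(X,Y)$ to the single equality $\dim([\p,X]+[\p,Y]) = \dim\p - \rank G$.

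Next I would extract a lower bound for $\dim([\p,X]+[\p,Y])$. Suppose (by symmetry) that $X$ is regular, so $\dim \c_\p(X) = \rank G$. Combined with separability of the orbit map $P \to P\cdot X$ from Theorem \ref{T:Pseparableintro}, this forces $\dim([\p,X]) = \dim\p - \dim\c_\p(X) = \dim\p - \rank G$. The inclusion $[\p,X] \subseteq [\p,X]+[\p,Y]$ then yields $\dim([\p,X]+[\p,Y]) \ge \dim\p - \rank G$, equivalently $\dim T_{(X,Y)}(\cCS(\p)) \le \dim\p + \rank G$.

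The reverse inequality is automatic: at any closed point of a scheme the tangent space has dimension at least the local Krull dimension, and at $(X,Y) \in \cC(\p)$ that local dimension equals $\dim\p + \rank G$ by our irreducibility hypothesis and Theorem \ref{T:main} (or Lemma \ref{L:irrcomp}). Combining the two bounds forces equality, hence $(X,Y)$ is a smooth point. There is essentially no obstacle here; the only nontrivial input is separability of the parabolic orbit map from Theorem \ref{T:Pseparableintro}, which is precisely what converts the Lie-theoretic regularity of $X$ into the geometric identity $\dim([\p,X]) = \dim\p - \rank G$.
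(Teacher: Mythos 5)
Your proof is correct and follows essentially the same route as the paper's argument, which appears in the prose paragraph immediately preceding the lemma: identify $T_{(X,Y)}(\cCS(\p))$ as the kernel of the linear map $(W,Z)\mapsto[X,Z]+[W,Y]$, whose image is $[\p,X]+[\p,Y]$; use regularity of $X$ to bound $\dim([\p,X]+[\p,Y])$ from below; and use $\dim\cC(\p)=\dim\p+\rank G$ (from irreducibility and Lemma \ref{L:irrcomp}) for the reverse bound on the tangent space. One small correction: the equality $\dim([\p,X])=\dim\p-\dim\c_\p(X)$ is pure linear algebra --- rank--nullity for the endomorphism $Y\mapsto[Y,X]$ of $\p$, whose kernel is $\c_\p(X)$ and whose image is $[\p,X]$ --- and does not require Theorem \ref{T:Pseparableintro}. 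Separability of the orbit map would be relevant if you wanted to pass between $\dim\c_\p(X)$ and $\dim C_P(X)$, but this step does not need that. Consequently your closing remark that separability is ``the only nontrivial input'' is inaccurate; the actual inputs are the tangent-space identification and the dimension formula for $\cC(\p)$.
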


We are now ready to proceed with Theorem \ref{T:normality}.
In the proof below we use the notation given in the proof of Theorem \ref{T:main}.

\begin{theorem} \label{T:normality}
Assume that $A(\l)$ is Cohen--Macaulay and that $\mod(P\cap H : \cN(\p\cap \h))< \ssrank H - 1$
for all Levi subgroups $H \ne T$ of $G$ containing $T$ with $\ssrank H > 1$.
Then $\cC(\p)$ is irreducible and normal.
\end{theorem}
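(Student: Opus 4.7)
The plan is to apply Serre's criterion: once $\cC(\p)$ is known to be irreducible and $A(\p)$ Cohen--Macaulay, it suffices to prove that the singular locus of $\cCS(\p)$ has codimension at least two in $\cC(\p)$. The hypothesis is stronger than the modality condition of Theorem \ref{T:main} for all Levi subgroups $H$ with $\ssrank H > 1$, and for the omitted case $\ssrank H = 1$ the variety $\cN(\p \cap \h)$ admits only finitely many $P \cap H$-orbits: in either sub-case (whether $P \cap H = H$, or $P \cap H$ is a Borel of $H$) there are exactly two orbits, namely $\{0\}$ and the regular nilpotent orbit of $\h$, so the modality is $0 < 1 = \ssrank H$. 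Hence Theorem \ref{T:main} yields irreducibility of $\cC(\p)$, and Proposition \ref{P:CM} then gives Cohen--Macaulayness.

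For the codimension bound, by Lemma \ref{L:regsmooth} the singular locus is contained in the closed subset of $\cC(\p)$ consisting of pairs $(X,Y)$ in which neither $X$ nor $Y$ is regular in $\p$; I would bound the dimension of this subset by stratifying via $\cC(\p) = \bigsqcup_{H,S} \cC_{H,S}$ from the proof of Theorem \ref{T:main}. The open stratum $\cC_{T, \{0\}}$ consists of pairs with $X$ regular semisimple, and so lies entirely in the smooth locus. For $\ssrank H > 1$ the hypothesis forces $\dim S - j \le \ssrank H - 2$, hence $\dim \cC_{H,S} \le \dim \p + \rank G - 2$, meeting the codimension-two bound directly.

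The main obstacle is the case $\ssrank H = 1$, in which $\dim \cC_{H,S} = \dim \p + \rank G - 1$, so one must find positive codimension inside $\cC_{H,S}$ for the non-regular-pair locus. If $S$ is the sheet of regular nilpotents of $\p \cap \h$, then $X = X_s + X_n$ has $\c_\g(X) = \c_\g(X_s) \cap \c_\g(X_n) = \c_\h(X_n)$ of dimension $\rank H = \rank G$, so $X$ is already regular in $\g$ and these pairs are smooth. If $S = \{0\}$, then $X \in \z(\h)^\reg$ is semisimple and non-regular (since $\c_\g(X) = \h$ has dimension exceeding $\rank G$), and $\c_\p(X) = \p \cap \h$; it suffices to show some $Y \in \p \cap \h$ is regular in $\g$, for then, since $\g^\reg$ is open in $\g$, the non-regular locus is a proper closed subvariety of $\p \cap \h$. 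Such a $Y$ is produced explicitly as $Y = Y_s + Y_n$ with $Y_s \in \z(\h)^\reg$ and $Y_n$ any non-zero element of $\u_H \subseteq \p \cap \h$, which is a regular nilpotent of $\h$ because $\ssrank H = 1$; then $\c_\g(Y) = \h \cap \c_\g(Y_n) = \c_\h(Y_n)$ has dimension $\rank G$. Combining the codimension one of $\cC_{H, \{0\}}$ in $\cC(\p)$ with the codimension-one locus of non-regular $Y$ inside $\cC_{H, \{0\}}$ gives codimension at least two in $\cC(\p)$, and Serre's criterion delivers normality.
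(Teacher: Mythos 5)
Your proposal is correct and follows essentially the same approach as the paper: invoke Theorem \ref{T:main} for irreducibility, Proposition \ref{P:CM} for Cohen--Macaulayness, Serre's criterion for normality, Lemma \ref{L:regsmooth} to localize the singular locus, and stratify $\cC(\p)$ via the $\cC_{H,S}$ from the proof of Theorem \ref{T:main}, treating $\ssrank H = 1$ by hand. The two rank-one subcases (the zero sheet via exhibiting a regular $Y \in \p \cap \h$ to get an open dense smooth locus, and the regular nilpotent sheet via noting $X$ is already regular in $\g$) match the paper's treatment; the only deviation is cosmetic, namely the undefined notation $\u_H$, which should simply be read as ``any nonzero nilpotent element of $\p \cap \h$.''
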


\begin{proof}
By Theorem \ref{T:main}, we have that $\cC(\p)$ is irreducible, so
we just have to prove normality.  To do this we prove that $A(\p)$ is normal,
which in turn implies that $I(\p)$ is a prime ideal, and $k[\cC(\p)] = A(\p)$.

Since, $A(\p)$ is Cohen--Macaulay, it suffices by Serre's criterion for normality, see
\cite[Theorem 11.5]{Ei}, to show that the singular locus of
$\cCS(\p)$ has codimension at least 2.
We let $\cC(\p)^\ssm$ be the set of points in $\cC(\p)$,
which are smooth as closed points in $\cCS(\p)$.
Then we have  to show that $\cC(\p) \setminus \cC(\p)^\ssm$
has codimension at least 2.

Let $(X,Y) \in \cC_{T,\{0\}}$.
Then $X$ is regular (semisimple), so that $(X,Y)$ is a
smooth point of $\cCS(\p)$ by Lemma \ref{L:regsmooth}.
Thus $\cC_{T,\{0\}} \sub \cC(\p)^\ssm$.

Consider a Levi subgroup $H$ of $G$ containing $T$ with $\ssrank H = 1$.
We have two possibilities for $H \cap P$, either this is equal to $H$ or is a Borel subgroup of
$H$; it turns out that the analysis of these two cases can be done simultaneously.

There are two $P \cap H$-orbits in $\cN(\p \cap \h)$ namely
the regular nilpotent orbit and the zero orbit.  Thus $\cS_H$ has these two orbits as its elements.

First consider $S = \{0\} \in \cS_H$, and look at $\cC'_{H,S}$.
We have that $\cC'_{H,S} = \{(X,Y) \mid X \in \z(\h)^\reg, Y \in \h\}$.
Let $\cY = \{(X,Y) \mid X \in \z(\h)^\reg, Y \in \h^\reg\} \sub \cC'_S$; we recall
that $\z(\h)^\reg = \{X \in \z(\h) \mid \c_\g(X) = \h\}$ and that
$\h^\reg$ denotes the set of regular elements in $\h$.  By Lemma \ref{L:regsmooth}
we have that $\cY$ and, thus $P \cdot \cY \sub \cC(\p)^\ssm$.
Also $P \cdot \cY$ is the subset of $\cC_{H,S}$ of those $(X,Y)$ for which
$Y$ is regular, and is thus open in $\cC_{H,S}$.
Therefore, the complement of $P \cdot \cY$ has codimension at least 1 in $\cC_{H,S}$.
In the proof of Theorem \ref{T:main}, we have shown that $\cC_{H,S}$ has codimension 1
in $\cC(\p)$.  So we can deduce that $\cC_{H,S} \setminus (P \cdot \cY)$ has codimension
at least 2 in $\cC(\p)$.

Next consider the case $S$ is the regular orbit of $P \cap H$ in $\cN(\p \cap \h)$.
Here we observe that any element of $\cC'_{H,S}$ is of the form
$(X+Z,Y+aZ)$, where $X,Y \in \z(\h)^\reg$, $Z \in \cN(\p \cap \h)$ is regular in $\h$, and $a \in k$.
Then $X+Z$ is regular in $\g$, so it follows from Lemma \ref{L:regsmooth}
that $\cC'_{H,S} \sub \cC(\p)^\ssm$, and thus
$\cC_{H,S} = P \cdot \cC'_{H,S} \sub \cC(\p)^\ssm$.

Now consider the decomposition
$$
\cC(\p) = \bigsqcup_{\substack{H \in \cH \\ S = \dot \cS_H}} \cC_{H,S},
$$
given in the proof of Theorem \ref{T:main}.

Let $H \in \cH$ with $\ssrank H > 1$.
Then by assumption we have that $\mod(P\cap H : \cN(\p\cap \h))< \ssrank H - 1$, so
$\dim \cC_{H,S} < \dim \p + \rank -1 $ by \eqref{e:C_S2}.  Thus $\cC_{H,S}$ has codimension at
least 2 in $\cC(\p)$ for each $S \in \cS_H$.

Moreover, we have shown above that $(\cC(\p) \setminus \cC(\p)^\ssm) \cap \cC_{H,S}$
has codimension at least 2 in $\cC(\p)$, for $H$ of semisimple rank 0 or 1 and $S \in \cS_H$.

Hence, we have proved that $(\cC(\p) \setminus \cC(\p)^\ssm)$ has codimension at
least 2 in $\cC(\p)$ as required.
\end{proof}

In the case $P = B$ is a Borel subgroup, we can remove the assumption
that $A(\l)$ is Cohen--Macaulay in Theorem \ref{T:normality},
as $\l = \t$ and $A(\t) = k[\t \times \t]$.  The converse
also holds as stated Theorem \ref{T:borelnormality} below.
In its proof we use the setup given in the proof
of Theorem \ref{T:main}.

\begin{theorem} \label{T:borelnormality}
Let $B$ be a Borel subgroup of $G$ and $\u = \cN(\b)$ the Lie algebra
of the unipotent radical of $B$.
Then $\cC(\b)$ is irreducible and normal if and only if $\mod(B\cap H : \u \cap \h) < \ssrank H - 1$ for
all Levi subgroups $H$ of $G$ containing $T$ with $\ssrank H > 1$.
\end{theorem}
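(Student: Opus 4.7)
The plan is to establish each direction separately. For the sufficient direction ($\Leftarrow$), I would apply Theorem~\ref{T:normality} with $P = B$: since $L = T$, we have $\l = \t$ and $A(\t) = k[\t \times \t]$ is a polynomial ring, hence Cohen--Macaulay, so the modality hypothesis is exactly that of Theorem~\ref{T:normality} and yields irreducibility and normality of $\cC(\b)$.

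For the necessary direction ($\Rightarrow$), I would proceed by contradiction. Assuming $\cC(\b)$ is irreducible and normal, Theorem~\ref{T:main} already gives $\mod(B \cap H : \u \cap \h) < \ssrank H$ for every Levi $H \ne T$ of $G$ containing $T$. Suppose some such $H$ with $\ssrank H > 1$ attains the borderline value $\mod = \ssrank H - 1$. Using the decomposition from the proof of Theorem~\ref{T:main} together with the dimension formula \eqref{e:C_S}, there is a sheet $S$ with $\dim S - j = \ssrank H - 1$ such that $\cC_{H, S}$ is an irreducible subvariety of $\cC(\b)$ of codimension exactly one. The aim is to show that a dense open subset of $\cC_{H, S}$ lies in the singular locus of $\cC(\b)$, producing a codimension-one singular locus and violating Serre's $R_1$ condition for normality.

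First I would reconcile the scheme and variety pictures. Proposition~\ref{P:CM} (applied with $L = T$) gives that $A(\b)$ is Cohen--Macaulay, and at any point of the open subset $\cC_\ss^\reg(\b) \sub \cC(\b)$ the scheme $\cCS(\b)$ is smooth by Lemma~\ref{L:regsmooth}, so $A(\b)$ is generically reduced; a generically reduced Cohen--Macaulay ring is reduced, so $A(\b) = k[\cC(\b)]$ and the singular loci of the scheme and variety coincide. Next, for a generic $(X, Y) \in \cC_{H, S}$ with Jordan decomposition $X = X_s + X_n$, where $X_s \in \z(\h)^\reg$, $X_n \in S$ and $Y \in \c_{\b \cap \h}(X_n)$, the $T$-weight decomposition $\b = (\b \cap \h) \oplus \b'$ with $\b' = \bigoplus_{\alpha \in \Phi^+ \setminus \Phi^+_H} \g_\alpha$, together with the identity $[\b, X_s] = \b'$ (which holds because $\c_\g(X_s) = \h$), produces via a direct bracket computation the identity
\[
[\b, X] + [\b, Y] \;=\; \b' \,\oplus\, \bigl([\b \cap \h, X_n] + [\b \cap \h, Y]\bigr).
\]
Substituting into the tangent space formula $\dim T_{(X, Y)} \cCS(\b) = 2\dim \b - \dim([\b, X] + [\b, Y])$ and using $\rank G = \rank H$ shows that $(X, Y)$ is a smooth point of $\cCS(\b)$ if and only if $(X_n, Y)$ is a smooth point of $\cCS(\b \cap \h)$.

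The hard part will be to show that, for a generic choice of $(X_n, Y)$ in the family described, $(X_n, Y)$ is \emph{not} a smooth point of $\cCS(\b \cap \h)$, equivalently that $\dim([\b \cap \h, X_n] + [\b \cap \h, Y]) < \dim(\b \cap \h) - \rank H$. Since $\dim S - j = \ssrank H - 1 > 0$, the sheet $S$ cannot be the regular nilpotent orbit of $B \cap H$ on $\u \cap \h$ (which has modality zero), so $X_n$ is non-regular in $\h$; moreover any semisimple element of $\c_{\b \cap \h}(X_n)$ must vanish on the roots appearing in the support of $X_n$, preventing a generic $Y$ from being regular semisimple in $\h$. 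The remaining step is a Keeton-style centralizer dimension count, in the spirit of \cite[Section~6.2]{Ke} and the computations in the proof of Theorem~\ref{T:normality}, which will yield the strict inequality and the desired contradiction.
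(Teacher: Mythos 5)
The sufficient direction is handled exactly as in the paper (apply Theorem~\ref{T:normality} with $\l = \t$). Your setup for the necessary direction is also faithful to the paper's framework, and your observation that $A(\b)$ is reduced because a generically reduced Cohen--Macaulay ring satisfies $(R_0) + (S_1)$ is a clean alternative to the paper's argument via the differentials $(df_\alpha)_{(X,Y)}$; your reduction to showing $(X_n,Y)$ is not a smooth point of $\cCS(\b\cap\h)$ is likewise correct.

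However, there is a genuine gap at what you label ``the hard part.'' The observations you offer --- that $X_n$ is not regular in $\h$, and that a generic $Y \in \c_{\b\cap\h}(X_n)$ is not regular semisimple --- do not by themselves imply that $(X_n,Y)$ fails to be a smooth point. Lemma~\ref{L:regsmooth} gives smoothness when $X$ or $Y$ is regular, but its converse is not asserted, and there are pairs $(X_n,Y)$ with neither entry regular where $\dim([\b\cap\h,X_n] + [\b\cap\h,Y])$ still attains the maximal value $\dim(\b\cap\h) - \rank H$. The paper closes this gap with a structural argument that your ``Keeton-style centralizer dimension count'' does not specify: one shows that the non-regular sheet $S$ lies inside $(\u\cap\h)^\beta$ for some simple root $\beta$ of $H$, then proves that the subvariety $\cC(\u\cap\h)_S$ has the same dimension as $\cC(\u\cap\h)$, hence its closure is an irreducible component; since irreducible components of a commuting variety are stable under the flip $(X,Y) \mapsto (Y,X)$ (citing \cite[Lemma~2.3]{GR}), this forces $\c_{\u\cap\h}(X_n) \sub (\u\cap\h)^\beta$. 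Combined with the fact from the proof of \cite[Theorem~5.1]{GMR} that $\c_{\b\cap\h}(X_n) = \z(\h) \oplus \c_{\u\cap\h}(X_n)$ for $X_n$ in the relevant sheet, one concludes that both $X$ and $Y$ land in $\z(\h) \oplus (\u\cap\h)^\beta$, hence $[\b,X]+[\b,Y] \sub \u^\beta$ has strictly smaller dimension than $\u$. Without the flip-stability input and the GMR input, there is no mechanism in your outline that pins down the image $[\b\cap\h,X_n]+[\b\cap\h,Y]$ to a proper subspace, so the argument does not close.
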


\begin{proof}
For the case $P = B$, we have $L = T$ and $A(\t) = k[\t \times \t]$,
which is certainly Cohen--Macaulay.
Thus if $\mod(B\cap H : \u \cap \h) < \ssrank H - 1$ for all $H$, then $\cC(\b)$ is normal by
Theorem \ref{T:normality}.  So we just have to prove the converse.

First we make the
simplifying observation that, under the assumption
that $\cC(\b)$ is irreducible, we have that $A(\b)$ is reduced.
To prove this we consider the functions $f_\alpha \in k[\b \times \b]$
for $\alpha \in \Phi(\u) = \Phi^+$, as defined in the proof of Proposition
\ref{P:CM}.

Let $(X,Y) \in \cC(\b)$.
The differential $(df_\alpha)_{(X,Y)} : T_(X,Y)(\b \times \b) \to k$ can be viewed as
a linear map $\b \times \b \to k$ and a calculation shows that this is the map
$(W,Z) \mapsto f_\alpha([X,Z] + [W,Y])$.  Thus the differentials $(df_\alpha)_{(X,Y)}$ for $\alpha \in \Phi^+$ are
linearly independent whenever $(X,Y) \in \cC(\b)$ is such that $[\b,X] + [\b,Y] = \u$;
this holds for example when $X$ is regular semisimple.  Now we can deduce that $I_\b$ is radical and thus $A(\b)$ is reduced,
see for example
\cite[Lemma 7.1]{Ja}.

We just have to consider the
case $\mod(B\cap H : \u \cap \h) = \ssrank H - 1$ for some Levi subgroup $H$ of $G$ with $\ssrank H > 1$.
In this case we prove that the singular locus of $\cC(\b)$ has codimension equal to 1;
we can work just with the variety here, because $A(\b)$ is reduced.
Then we can apply Serre's criterion for normality to deduce that $\cC(\b)$ is not normal.

We have that $H$ is conjugate in $G$ to a standard Levi subgroup, so since $B \cap H$
is a Borel subgroup of $H$, we may as well assume that $H$ is a standard Levi subgroup.
Let $J \sub \Pi$ be such that $\Phi(\h) = \Phi_J$.

In the notation given in the proof of Theorem \ref{T:main}, we have that there is
some $S \in \cS_H$ such that $\dim \cC_{H,S} = \dim \b + \rank G - 1$.
From the proof of \cite[Theorem 5.1]{GMR} we can assume that
$Z(H)C_U(U \cap H)$ has finite index in $C_{B \cap H}(X)$ for any $X \in S$;
in other words the connected component of the centralizer of $X$ in the $B \cap \cD H$
is unipotent, where $\cD H$ denotes the derived subgroup of $H$.  This implies
that $\mod(U \cap H : \u \cap \h) = 2\ssrank H - 1$.
Let $j \in \Z_{\ge 0}$, be such
that $S \in (\cS_H)_j$.  Then we have $\dim S = j + \ssrank H  - 1$.

Let $X \in S$.  Then we have
$\dim C_{B \cap H}(X) = \dim (B \cap H) - j$ for any $X \in S$, so
$\dim C_{U \cap H}(X) = \dim (U \cap H) - j - \ssrank H$.
By Theorem \ref{T:Pseparableintro}, we have $\dim C_{B \cap H}(X) = \dim \c_{\b \cap \h}(X)$, and
also we have $\dim C_{U \cap H}(X) = \dim \c_{\u \cap \h}(X)$,
see for example \cite[Corollary 4.3]{Go}.
From this we deduce that $\c_{\b \cap \h}(X) = \z(\h) \oplus \c_{\u \cap \h}(X)$.

Consider the set $S^\reg$ of elements of $\u \cap \h$ of the form
$$
\sum_{\alpha \in \Phi_J \cap \Phi^+} a_\alpha E_\alpha,
$$
where $a_\alpha \ne 0$ for all $\alpha \in J$.  The elements of $S^\reg$ are precisely the
regular nilpotent elements in $\u \cap \h$, and they form a single $B \cap H$-orbit.
Moreover, $S^\reg$ is a sheet for the action of $B \cap H$ on $\u \cap \h$ and
$S^\reg \in (\cS_H)_{\dim(\u\cap \h)}$.  We have $\dim S^\reg = \dim(\u\cap \h) < \ssrank H + \dim(\u\cap \h) -1$,
because $\ssrank H > 1$.  Therefore, $S \ne S^\reg$.

For $\beta \in J$, we let $(\u \cap \h)^\beta = \bigoplus_{\alpha \in \Phi(\u \cap \h) \setminus \{\beta\}} \g_\alpha$.
Then we have $(\u \cap \h) \setminus S^\reg = \bigcup_{\beta \in J} (\u \cap \h)^\beta$, and it follows
that $S \sub (\u \cap \h)^\beta$ for some $\beta \in J$.

Next we consider $\cC(\u \cap \h)$, which has dimension $\dim (\u \cap \h) + \mod(U \cap H : \u \cap \h)$
by Lemma \ref{L:dimcg}.
We define $\cC(\u \cap \h)_S = \cC(\u \cap \h) \cap (S \times (\u \cap \h))$.
The projection onto the first component $\cC(\u \cap \h)_S \to S$ is surjective and the fibre of
$X \in S$ is equal to $\{X\} \times \c_{\u \cap \h}(X)$.  As seen above we have that
$\c_{\u \cap \h}(X) = \dim (U \cap H) - j - \ssrank H$ and that this is independent of $X \in S$.
Therefore,
\begin{align*}
\dim \cC(\u \cap \h)_S  &= \dim S + \dim(U \cap H) - j + \ssrank H \\
&= j + \ssrank H - 1 + \dim(U \cap H) - j + \ssrank H \\
&= \dim(U \cap H) + 2\ssrank H - 1\\
&= \dim(\u \cap \h) + \mod(U \cap H : \u \cap \h) \\
&= \dim \cC(\u \cap \h).
\end{align*}
Further, $\cC(\u \cap \h)_S$ is irreducible, so its closure in $\cC(\u \cap \h)$
is an irreducible component of $\cC(\u \cap \h)$.

Therefore, $\bar{\cC(\u \cap \h)_S}$ is stable under the flip $(X,Y) \mapsto (Y,X)$,
see for example \cite[Lemma 2.3]{GR}.  Since $S \sub (\u \cap \h)^\beta$ we deduce that
$\cC(\u \cap \h)_S \sub (\u \cap \h)^\beta \times (\u \cap \h)^\beta$, so
that $\c_{\u \cap \h}(X) \sub (\u \cap \h)^\beta$ for any $X \in S$.

Now consider $\cC'_{H,S}$ as defined in the proof of Theorem \ref{T:main}.
Let $(X,Y) \in \cC'_{H,S}$.  Then $X = X_1 + X_2$, where $X_1 \in \z(\h)^\reg$
and $X_2 \in S$.
We have shown above that $\c_{\b \cap \h}(X_2) = \z(\h) \oplus \c_{\u \cap \h}(X_2)$,
so we can write $Y = Y_1 + Y_2$, where $Y_1 \in \z(\h)$, $Y_2 \in \c_{\u \cap \h}(X_2)
 \sub (\u \cap \h)^\beta$.

Now we aim to show that $(X,Y)$ is a singular point of $\cC(\b)$.
We recall from the discussion before Theorem \ref{T:normality} that $(X,Y)$
is a smooth point of $\cC(\b)$ if and only if $\dim([\b,X]+[\b,Y]) = \dim \u$.  However,
we know that $X \in \z(\h) \oplus (\u \cap \h)^\beta$, which implies that
$[\b,X] \sub \u^\beta$.  Similarly, $Y \in \z(h) \oplus (\u \cap \h)^\beta$,
so that $[\b,Y] \sub (\u \cap \h)^\beta$.  Thus, $[\b,X]+[\b,Y] \sub \u^\beta$
so $\dim([\b,X]+[\b,Y]) < \dim \u$.  Hence, $(X,Y)$ is a singular point of $\cC(\b)$.

We deduce that all elements of $\cC_{H,S} = B \cdot \cC'_{H,S}$ are singular in
$\cC(\b)$.  Further, $\dim \cC_{H,S} = \dim \cC(\b) - 1$.  Hence, $\cC(\b)$ is not normal
by Serre's criterion for normality.
\end{proof}

\section{Classification of irreducibility and normality of \texorpdfstring{$\cC(\b)$}{C(b)}}

We consider the case where $P = B$ is a Borel subgroup, where
we can give a full classification of irreducibility and normality of $\cC(\b)$.
This requires recent results in \cite{GMR} and \cite{PS} giving $\mod(B : \u)$ for simple $G$ of
sufficiently large rank, here we recall that $\cN(\b) = \u$ is the nilradical of $\b$.
We also use lower bounds for $\mod(B : \u)$
established in \cite[Theorem 3.1]{Ro1}.

In \cite{GMR} a parametrization of the coadjoint orbits of $U$ in $\u^*$
is given for $G$ of rank at most $8$ apart from $G$ of type $\rE_8$.  It
is known that $\mod(U:\u) = \mod(U:\u^*)$, see \cite[Theorem 1.4]{Ro2}.  Also in \cite[Theorem 5.1]{GMR} it
is proved that $\mod(U:\u) = \mod(B:\u) + \ssrank G$.  Thus
values of $\mod(B:\u)$ for $G$ up to rank $8$ apart from $G$ of type $\rE_8$
were determined.  This extended previously known values of $\mod(B:\u)$
given in \cite[Tables II and III]{JR}.

The results in \cite{PS} can be used to determine $\mod(B:\u)$ for
$G = \GL_n(k)$ and $n \le 16$, as we explain below.   Let $q$ be a prime power,
let $U(q)$ be the subgroup of upper unitriangular matrices in $\GL_n(q)$
and let $\u^*(q)$ be the dual space of the space $\u(q)$ of strictly upper
triangular matrices in $\gl_n(q)$.  Then $U(q)$ acts on $\u^*(q)$ via the coadjoint
action.  The number $k(U(q),\u^*(q))$ of coadjoint orbits of $U(q)$ in $\u^*(q)$ is determined
for $n \le 16$ in \cite{PS} and this number is shown to be a polynomial in $q$,
see \cite[Theorem 1.2]{PS}.  Although \cite{PS} only deals with finite fields,
the methods used can be adapted to apply for
other fields.  In particular, this means that the calculations
carried out as part of \cite{PS} can be used to determine
$\mod(U:\u^*)$ for $G = \GL_n(k)$ and $n \le 16$; moreover, we
see that $\mod(U:\u^*)$ is equal to the degree of the polynomial in $q$ giving
$k(U(q),\u^*(q))$.  Combining this with the fact that $\mod(U : \u^*) = \mod(U : \u) = \mod(B:\u) + \ssrank G$,
we deduce the values of $\mod(B:\u)$.

Combining the results in \cite{GMR}, \cite{PS} and \cite{Ro1}, gives
Tables 1--5, containing the exact value or a lower bound for $\mod(B:\u)$ for $G$ of low rank.  We note that
in higher rank cases the lower bounds from \cite{Ro1} do give that $\mod(B:\u) > \rank G$ in these cases.

\begin{center}
\begin{table}[htb]
\renewcommand{\arraystretch}{1.1}
\begin{tabular}{c|ccc ccc ccc ccc ccc cc}
Type of $G$ & $\rA_1$ & $\rA_2$ & $\rA_3$ & $\rA_4$ & $\rA_5$ & $\rA_6$ & $\rA_7$ & $\rA_8$ & $\rA_9$  \\ \hline
$\mod(B:\u)$ & 0 & 0 & 0 & 0 & 1 & 1 & 2 & 3 & 4  \\  \hline

Type of $G$ & $\rA_{10}$ & $\rA_{11}$ & $\rA_{12}$ &  $\rA_{13}$ & $\rA_{14}$ & $\rA_{15}$ & $\rA_{16}$ & $\rA_{17}$ \\ \hline
$\mod(B:\u)$ & 5 & 7 & 8 & 10 & 12 & 14 & $\ge 16$ & $\ge 19$
\end{tabular}
\medskip
\caption{Modality of the action of $B$ on $\u$ for $G$ of type $\rA$}
\end{table}
\end{center}

\begin{center}
\begin{table}[htb]
\renewcommand{\arraystretch}{1.1}
\begin{tabular}{c|cc ccc cc}
Type of $G$ & $\rB_2$ & $\rB_3$ & $\rB_4$ & $\rB_5$ & $\rB_6$ & $\rB_7$ & $\rB_8$ \\ \hline
$\mod(B:\u)$ & 0 & 1 & 2 & 3 & 5 & 7 & 9
\end{tabular}
\medskip
\caption{Modality of the action of $B$ on $\u$ for $G$ of type $\rB$}
\end{table}
\end{center}

\begin{center}
\begin{table}[htb]
\renewcommand{\arraystretch}{1.1}
\begin{tabular}{c|cc ccc c}
Type of $G$ & $\rC_3$ & $\rC_4$ & $\rC_5$ & $\rC_6$ & $\rC_7$ & $\rC_8$ \\ \hline
$\mod(B:\u)$ & 1 & 2 & 3 & 5 & 7 & 9
\end{tabular}
\medskip
\caption{Modality of the action of $B$ on $\u$ for $G$ of type $\rC$}
\end{table}
\end{center}

\begin{center}
\begin{table}[htb]
\renewcommand{\arraystretch}{1.1}
\begin{tabular}{c|cc ccc }
Type of $G$ & $\rD_4$ & $\rD_5$ & $\rD_6$ & $\rD_7$ & $\rD_8$ \\ \hline
$\mod(B:\u)$  & 1 & 2 & 4 & 5 &  8
\end{tabular}
\medskip
\caption{Modality of the action of $B$ on $\u$ for $G$ of type $\rD$}
\end{table}
\end{center}

\begin{center}
\begin{table}[h!tb]
\renewcommand{\arraystretch}{1.1}
\begin{tabular}{c|cc ccc }
Type of $G$  & $\rG_2$ & $\rF_4$ & $\rE_6$ & $\rE_7$ & $\rE_8$ \\ \hline
$\mod(B:\u)$  & 1 & 4 & 5 & 10 & $\ge 20$
\end{tabular}
\medskip
\caption{Modality of the action of $B$ on $\u$ for $G$ of exceptional type}
\end{table}
\end{center}

From  Tables 1--5 and Theorem \ref{T:main}, we immediately deduce Theorem \ref{T:borelclass}.
Also from Tables 1--5 and Theorem \ref{T:borelnormality}, we can deduce the classification of
when $\cC(\b)$ is normal given in the following theorem.

\begin{theorem} \label{T:borelnormalityclass}
Let $B$ be a Borel subgroup of $G$. Then
$\cC(\b)$ is irreducible and normal if and only if the type of each simple component of $G$ is one of the following.
\begin{itemize}
\item $\rA_l$ for $l \le 14$;
\item $\rB_l$ for $l \le 5$;
\item $\rC_l$ for $l \le 5$; or
\item $\rD_l$ for $l \le 7$.
\end{itemize}
\end{theorem}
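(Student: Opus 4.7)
The plan is to combine Theorem~\ref{T:borelnormality} with the numerical data in Tables~1--5, after first reducing to the case that $G$ is simple.

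Using the central product decomposition $G = Z(G) G_1 \cdots G_m$ into simple components discussed at the end of the introduction, we have $\b = \z(\g) \oplus \b_1 \oplus \cdots \oplus \b_m$ with $\b_i = \b \cap \g_i$, and so
$$
\cC(\b) \cong (\z(\g) \times \z(\g)) \times \cC(\b_1) \times \cdots \times \cC(\b_m).
$$
Since over an algebraically closed field a finite product of irreducible normal varieties is irreducible and normal, we may assume $G$ is simple. By Theorem~\ref{T:borelnormality}, it then suffices to decide when $\mod(B \cap H : \u \cap \h) < \ssrank H - 1$ holds for every Levi subgroup $H$ of $G$ containing $T$ with $\ssrank H > 1$. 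Both quantities are additive over the simple factors of $H$, so writing $\epsilon(X) = \ssrank X - \mod(B_X : \u_X)$ for a simple group $X$, the condition reads $\sum_i \epsilon(H_i) \ge 2$ where $H_1, \ldots, H_k$ are the simple components of $H$.

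For the forward direction, I would inspect Tables~1--5 to confirm that for every simple type on the proposed list, $\epsilon \ge 2$ holds, with the sole exception of $X = \rA_1$, for which $\epsilon = 1$. If $G$ is of type on the list, then the simple components of any Levi of $G$ are themselves of classical type and of rank at most $\rank G$, hence all appear on the list. Consequently, if some $H_i$ has semisimple rank at least $2$ then $\epsilon(H_i) \ge 2$, while otherwise every $H_i$ is of type $\rA_1$ and the hypothesis $\ssrank H > 1$ forces $k \ge 2$, so $\sum_i \epsilon(H_i) = k \ge 2$. Either way the inequality holds and $\cC(\b)$ is irreducible and normal.

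For the converse, for each simple type not on the list I would exhibit the Levi $H = G$ for which the criterion already fails. The tables give $\epsilon(\rA_{15}) = \epsilon(\rB_6) = \epsilon(\rC_6) = \epsilon(\rG_2) = \epsilon(\rE_6) = 1$ and $\epsilon(\rD_8), \epsilon(\rF_4), \epsilon(\rE_7), \epsilon(\rE_8) \le 0$, and for simple types of rank beyond those tabulated the lower bounds from \cite[Theorem 3.1]{Ro1} cited just before the tables already give $\mod(B:\u) > \rank G \ge \ssrank G - 1$. The main obstacle is straightforward bookkeeping: one must check that every Levi sub-root-system in a simple $G$ on the list has simple factors belonging to the classical families appearing in the tables, but this is routine via standard Dynkin diagram inspection and leaves only the elementary additive estimate above.
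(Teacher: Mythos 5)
Your proof is correct and follows essentially the same route the paper takes: the authors state Theorem \ref{T:borelnormalityclass} as a direct consequence of Theorem \ref{T:borelnormality} together with Tables 1--5, and the reduction to simple factors, the additivity of both $\ssrank$ and the Borel modality over the simple components of a Levi, and the resulting defect check $\sum_i \epsilon(H_i)\ge 2$ is exactly the implicit bookkeeping. Your arithmetic from the tables (in particular $\epsilon(\rA_{14})=2$, $\epsilon(\rD_7)=2$ on the boundary, and $\epsilon=1$ or $\le 0$ for the excluded types) agrees with the data, and the observation that $\rA_1$ is the only type with $\epsilon=1$, so that it only arises harmlessly when $\ssrank H>1$ forces at least two such factors, closes the forward direction cleanly.
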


We end this paper by briefly discussing some cases where we can determine whether
$\cC(\p)$ is irreducible or reducible for $P \ne B$.  In general little is known
about the values of $\mod(P:\cN(\p))$, so that we can only present limited results in this direction.

First we note that by Proposition \ref{P:paracontain} and Theorem \ref{T:borelclass}, we have
$\cC(\p)$ is irreducible whenever $G$ satisfies the conditions
in Theorem \ref{T:borelclass}.

In \cite[Table 1]{Ro1} R\"ohrle lower bounds for $\mod(P:\u_P)$ are given, where $\u_P$ is
the Lie algebra of the unipotent radical of $P$.  Of course, we have
$\mod(P: \u_P) \le \mod(P:\cN(\p))$.  From these lower bounds, we can determine
many instances where $\cC(\p)$ is reducible.  In particular, these lower bounds
are quadratic in $\ssrank G - \ssrank L$, so for a fixed value of $\ssrank L$, we have that
$\cC(\p)$ is reducible if $\rank G$ is sufficiently large.

If the number of $P$-orbits in $\cN(\p)$ is finite, so that $\mod(P:\cN(\p)) = 0$,
then certainly $\cC(\p)$ is irreducible.
It follows from results of Murray in \cite{Mur} that $\mod(P:\cN(\p)) = 0$ for
$P$ a maximal parabolic subgroup of $\GL_n(k)$ such that one block
has size less than 6.
Very recent work of Bulois--Boos in \cite[Main theorem]{BB} gives a classification of
cases where $\mod(P:\cN(\p)) = 0$ for $G = \GL_n(k)$.  Further,
in \cite[\S 6]{BB}, there is a discussion of cases of higher modality. In particular,
it is shown that for the maximal parabolic subgroup $P$ of $\GL_n(k)$ with block sizes
$(200,400)$, we have that $\cC(\p)$ is reducible.

\end{document}